\theoremstyle{plain}
\newtheorem{theorem}{Theorem}[section]
\newtheorem{proposition}{Proposition}[section]
\newtheorem{corollary}{Corollary}[section]
\theoremstyle{definition}
\newtheorem{definition}{Definition}[section]
\newtheorem{remark}{Remark}[section]
\newcommand{\re}[1]{\textrm{Re}\left( #1 \right)}
\newcommand{\imag}[1]{\textrm{Im}\left( #1 \right)}
\newcommand{\sige}{\sigma_{\textrm{c}}}
\newcommand{\epsi}{\delta}
\newcommand{\ve}{\varepsilon}
\newcommand{\A}{\mathbb{A}}
\newcommand{\C}{\mathbb{C}}
\newcommand{\I}{\mathbb{I}}
\newcommand{\R}{\mathbb{R}}
\newcommand{\cA}{\mathcal{A}}
\newcommand{\cB}{\mathcal{B}}
\newcommand{\cC}{\mathcal{C}}
\newcommand{\cH}{\mathcal{H}}
\newcommand{\cL}{\mathcal{L}}
\newcommand{\cR}{\mathcal{R}}
\newcommand{\cT}{\mathcal{T}}
\newcommand{\rd}{\textrm{d}}
\newcommand{\bv}{{\bf{v}}}
\newcommand{\bw}{{\bf{w}}}
\newcommand{\bW}{{\bf{W}}}
\title[A Geometrically Inspired Evans Function]{Numerical computation of an Evans function for travelling waves}
\begin{document}
\author[K. Harley]{K. Harley$^1$}\thanks{$^1$Mathematical Sciences School, Queensland University of Technology, Brisbane, QLD 4000 Australia}
\author[P. van Heijster]{P. van Heijster$^1$}
\author[R. Marangell]{R. Marangell$^{2\dagger}$}\thanks{$^2$School of Mathematics and Statistics, University of Sydney, Sydney, NSW 2006 Australia}
\author[G. J. Pettet]{G. J. Pettet $^1$}\thanks{$^\dagger$Corresponding Author: \textsc{email:} robert.marangell@sydney.edu.au}
\author[M. Wechselberger]{M. Wechselberger$^2$}

\begin{abstract}
We demonstrate a geometrically inspired technique for computing Evans functions for the linearised operators about travelling waves. Using the examples of the F-KPP equation and a Keller-Segel model of bacterial chemotaxis, we produce an Evans function which is computable through several orders of magnitude in the spectral parameter and show how such a function can naturally be extended into the continuous spectrum. In both examples, we use this function to numerically verify the absence of eigenvalues in a large region of the right half of the spectral plane. We also include a new proof of spectral stability in the appropriate weighted space of travelling waves of speed $c \geq 2 \sqrt{\delta}$ in the F-KPP equation. 
\end{abstract}

\maketitle

\section{Introduction}\label{sec:intro} 

The main focus of this article is a geometrically inspired technique for numerically analysing the spectral stability of a travelling wave. In particular, we illustrate a new method for computing  
an {\em Evans function} for a linearised operator, linearised about a travelling wave solution to a partial differential equation (PDE) in 1+1 independent variables. Evans functions first arose in the 1970's \cite{evans75} and are typically constructed using a matching condition (see, for example, \cite{agj90, garzum98, sand02}). They are analytic functions in some relevant region of the complex $\lambda$-spectral plane, with the property that the multiplicity of their roots coincide with the multiplicity of $\lambda$ as an eigenvalue. A well-known obstacle in the numerical computation of the Evans function is the tendency of the {associated eigenvalue} ordinary differential equations (ODEs) to become stiff, an issue that can sometimes be overcome by working in the exterior product space \cite{allenbridges02, brigesderksgott02, brin98, brinzum02}. Building on this, there have been two complementary directions to the study of (efficient) numerical computation of the Evans function: Continuous Orthogonalization (see \cite{humzum06, humstanzum06, zum09} and the references therein for details) and Grassmannian Spectral Shooting, or the {\em Riccati approach} \cite{beckmal13, ledouxmalniesenthum08, ledouxmalthum10}. 

It is this second point of view that we follow here. By following the Riccati approach, and using the geometric structure of the problem, we are led to an Evans function whose computation is efficiently tractable even through relatively large changes in the order of the spectral parameter. 
The geometric interpretation of the Riccati equation allows us 
to avoid the problem of finite time blowup of the relevant solutions to nonlinear ODEs. Furthermore, this approach is readily extendible to values of the spectral parameter which lie in the continuous spectrum (also called the essential spectrum). In extending the Evans function into the continuous spectrum, we highlight another underlying relationship between instabilities of a travelling wave and the geometry of the spectral problem.

{\color{black}
Our approach shows the power of topological/dynamical systems techniques to analyse analytically and numerically the {\color{black} (spectral)} stability of travelling waves that are pervasive in the mathematical biological literature.
In this manuscript we illustrate this technique on two well-known models, both considered on an unbounded domain.}
The first is the Fisher/Kolmogorov--Petrovsky--Piscounov (F-KPP) equation 
\begin{equation}\label{eq:fish}
u_t = \epsi u_{xx} + u(1-u)\,, 
\end{equation}
and the second is a Keller--Segel (K-S) model of bacterial chemotaxis
\begin{equation} \label{eq:ks-pde-full}
\begin{split}
u_t & =  \ve u_{xx}- \alpha w\,,  \quad   \\
w_t & = \delta w_{xx} - \beta  \left( \frac{w u_x}{u} \right)_x\ .
\end{split}
\end{equation}

The F-KPP equation was chosen because in this case, the travelling wave stability analysis becomes {\em analytically} tractable (see \Cref{sec:appendix}). Further, the literature on stability of travelling waves in \cref{eq:fish} is vast (see for example \cite{murray03,sand02, sherrattetal14, saarloos03} and the references therein for a partial list of the stability results). It thus provides a well-known backdrop against which to verify our spectral calculations. The K-S model in \cref{eq:ks-pde-full} was chosen in order to highlight how to extend our methods beyond scalar PDEs. It is also convenient because explicit solutions to \cref{eq:ks-pde-full} can be found when $\ve =0$ \cite{FC00}. We are thus able to omit a time-consuming step (numerically finding the solutions to the travelling wave ODEs) and focus on setting up and analysing the linearised spectral problem. Explicit solutions are not necessary for our methods to work (as the F-KPP example shows) and we discuss how the stability analysis of travelling waves can be adapted to the K-S problem when $\ve \neq 0$ in \Cref{sec:smalldisp}. 

The F-KPP equation was first introduced by Luther in 1906 who originally used it to model and study travelling waves in chemical reactions \cite{luth1906}. It was named for Fisher, and for Kolmogorov, Petrovsky and Piscounov, who independently wrote seminal papers on the equation, using it to model the spread of a gene through a population \cite{fish37,kpp37}. In this manuscript, we assume the diffusion coefficient $\delta $ is strictly positive. 

\Cref{eq:ks-pde-full} was proposed in the 1970's by Keller and Segel \cite{KS71a, KS71b} to describe chemo-tactically-driven cell migration in which a population of bacteria exhibits an advective flux in response to a gradient of a diffusible secondary species (i.e. nutrient); see \cite{HP09, TMPA08} and references therein for a current overview of PDE models with chemotaxis. In \cref{eq:ks-pde-full}, the bacteria population density is denoted by $w(x,t)$ and the nutrient concentration by $u(x,t)$.  The model exhibits so-called logarithmic sensitivity and we assume a constant consumption rate function.
The diffusion of the nutrient is assumed to be much smaller than the diffusion of the bacteria population: $0 \leq \ve \ll \delta$. Finally, $\alpha>0$ models the rate at which nutrients are consumed, while $\beta>0$ measures the strength of the chemotaxis term. We also assume $0< \delta < \beta$. 

By a {\em travelling wave} solution of eqs. \eqref{eq:fish} or \eqref{eq:ks-pde-full}, we mean a solution to \cref{eq:fish} of the form $u(x-ct)$, or a pair of solutions to \cref{eq:ks-pde-full} of the form $(u(x-ct), w(x-ct))$ travelling from left to right with some positive (constant) wave propagation speed $c$.

To study travelling wave solutions, we introduce a moving coordinate frame; setting $z:= x-ct$, and $\tau := t$, \cref{eq:fish} becomes
\begin{equation}\label{eq:travellingpde} u_\tau = \epsi u_{zz} + cu_z +u(1-u)\,, \end{equation} 
while \cref{eq:ks-pde-full} becomes
\begin{equation}\label{eq:ks-travellingpde}
\begin{split}
u_\tau & =  \ve u_{zz}- \alpha w + cu_z\ , \\
w_\tau & = \delta w_{zz} - \beta  \left( \frac{w u_z}{u} \right)_z +cw_z\,.
\end{split}
\end{equation}

Travelling waves $u = \hat{u}(z)$, or $(u,w) = (\hat{u}(z),\hat{w}(z))$ will then satisfy the ODEs:
\begin{equation}\label{eq:travellingode}
\epsi u_{zz} + c u_z + u(1-u) = 0, 
\end{equation}
or
\begin{equation}\label{eq:ks-travellingode}
\begin{split}
\ve u_{zz}- \alpha w + cu_z\  & = 0, \\
 \delta w_{zz} - \beta  \left( \frac{w u_z}{u} \right)_z +cw_z & = 0\,.
\end{split}
\end{equation}

Once travelling waves to \cref{eq:fish,eq:ks-pde-full} have been found, we are next concerned with their stability. In particular, we are interested in the spectral stability of  the travelling waves. A full analysis of stability of travelling waves in the F-KPP and K-S equations is well beyond the scope of this manuscript. However, in the F-KPP equation, it is known that travelling waves of speed $c\geq 2\sqrt{\delta}$ are spectrally and linearly stable relative to certain perturbations (or in certain weighted spaces) and that the travelling waves of speed $0<c<2\sqrt{\delta}$ are unstable to all perturbations. One can find these results in a variety of sources relating to travelling waves, see for example \cite{murray03,sand02, sherrattetal14, saarloos03} and the references therein. For completeness, we include a proof in \Cref{sec:fkpp}. These results imply that there will not be any eigenvalues (with eigenfunctions in an appropriate space) with positive real part of the linear operator found by linearising \cref{eq:travellingpde} about travelling wave solutions. This is numerically confirmed by our calculations. 

Stability theory for travelling waves in chemotactic models is of course newer, and the full stability analysis does not appear to be known, but partial stability results for the K-S model we are considering in \cref{eq:ks-pde-full}, {\color {black} can be found in}, for example, \cite{nagai1991travelling,rosen1975stability}, and  the review paper by Wang \cite{wang2013}. In \Cref{sec:keller-segel} we focus on the model when $\ve = 0$. For the explicit travelling waves in this model, it is known that the continuous spectrum 
{\color{black} has} a nonzero intersection with the right half plane, and that such spectrum cannot be entirely weighted away \cite{nagai1991travelling}. This suggests the presence of so-called {\em absolute} spectrum in the right half plane. We numerically confirm this. We do not speculate on the effect that the absolute spectrum with positive real part may have on the dynamics (either long term or otherwise). It is also known that, for the travelling waves we consider in \Cref{sec:keller-segel}, the linearised operator does not have any real positive eigenvalues \cite{rosen1975stability}. There does not appear to be any proof in the literature of the absence of eigenvalues in the right half plane with nonzero imaginary part. We verify this fact for a large domain in the right half of the complex plane.  We further show that $\lambda =0$ is an eigenvalue of the linearised system, with multiplicity two. 

We illustrate our methods first for travelling waves in the F-KPP equation. In \Cref{sec:fkpp}, we set up the spectral problem, and find the continuous spectrum. We then define the point spectrum and Evans functions as a means to find it. We then introduce the Riccati equation and use it to construct a new, well-behaved function whose roots correspond to eigenvalues (a defining feature of an Evans function). We extend this function into the continuous spectrum in the natural way and show that its roots still correspond to eigenvalues of the linear operator.  
We conclude \Cref{sec:fkpp} with what is, to the best of our knowledge, a new proof of the lack of eigenvalues with positive real part of the linearised operator about a travelling wave in the F-KPP equation. In \Cref{sec:keller-segel}, we follow the same recipe and show how to apply the ideas of \Cref{sec:fkpp} to {\color{black} travelling waves in} systems of PDEs with more than one dependent variable, using as our example, {\color{black} a K-S model of bacterial chemotaxis}
\cref{eq:ks-pde-full} when $\ve = 0$. We set up the associated spectral problem, and compute the continuous spectrum, define eigenvalues, and the Evans function. We then compute the Riccati equation and accompanying machinery for this example and use it to define a new, well-behaved Evans function which can be readily extended into the continuous spectrum. We then use this function to numerically verify the absence of eigenvalues with positive real part in a large domain of the complex spectral plane. We conclude the section by numerically establishing that $\lambda=0$ is an eigenvalue of the linearised operator of multiplicity two. 
{\color{black} In \Cref{sec:summary}, we summarise our results and provide concluding remarks.}

\subsection{Acknowledgements} {\color{black} RM, GJP and MW} gratefully acknowledge the partial support of Australian Research Council grant ARC DP110102775. {\color {black} PvH gratefully acknowledges support under the Australian Research Council's Discovery Early Career Researcher Award funding scheme DE140100741.  KH also gratefully acknowledges support from an Australian Mathematical Society Lift-off Fellowship.}

\section{Travelling waves in the F-KPP equation} \label{sec:fkpp}

{\color{black}
We use a dynamical systems approach to analyse the travelling wave problem of the F-KPP equation, i.e.\,we write \cref{eq:travellingode} as a system of first-order equations:} 
\begin{equation}\label{eq:odesys} 
\frac{du}{dz}  = v\,, \quad \frac{dv}{dz} =  \frac{1}{\epsi}\left(-cv -u(1-u) \right)\,.\end{equation}
There are two equilibria of \cref{eq:odesys} in the $uv$-plane: one at $(0,0)$ and one at $(1,0)$. The Jacobian of \eqref{eq:odesys} is
$$ Df(u,v) = \begin{pmatrix} 0 & 1 \\ \frac{-1+2u}{\epsi} & \frac{-c}{\epsi}  \end{pmatrix}\,.$$
At the point $(1,0)$, the eigenvalues of $Df(u,v)$ are 
\begin{equation}
\mu_1^u = \frac{-c + \sqrt{c^2 + 4\epsi}}{2 \epsi}\,,  \quad \mu_1^s = \frac{-c - \sqrt{c^2 + 4\epsi}}{2 \epsi}\,.
\end{equation}
For all values of $c$ there is one positive eigenvalue and one negative eigenvalue. Thus, $(1,0)$ is a saddle point in the phase plane. At the point $(0,0)$, the eigenvalues of $Df(u,v)$ are
\begin{equation}
\mu_0^s = \frac{-c + \sqrt{c^2 - 4\epsi}}{2 \epsi} \,,  \quad \mu_0^{ss} = \frac{-c - \sqrt{c^2 - 4\epsi}}{2 \epsi}\,.
\end{equation}
For $c \geq 2\sqrt{\epsi}$ these are two real (distinct or equal), negative eigenvalues so $(0,0)$ is a (possibly degenerate) node. For $0< c <2\sqrt{\epsi}$, it is a stable focus. It is easy to see from the related phase portrait that for any value of $c > 0$ there is a heteroclinic orbit connecting $(1,0)$ to $(0,0)$. When $c^2\geq 4\epsi$ this orbit remains negative in {\color{black} $v$}, corresponding to a family of monotone travelling waves satisfying $\hat{u}(-\infty) = 1$ and $\hat{u}(+\infty) = 0$. When $c^2<4 \epsi$, there is a family of non-monotone travelling waves. See \Cref{fig:kpptravellingwave}.

\begin{figure}\label{fig:kpptravellingwave}
\includegraphics[scale=0.66]{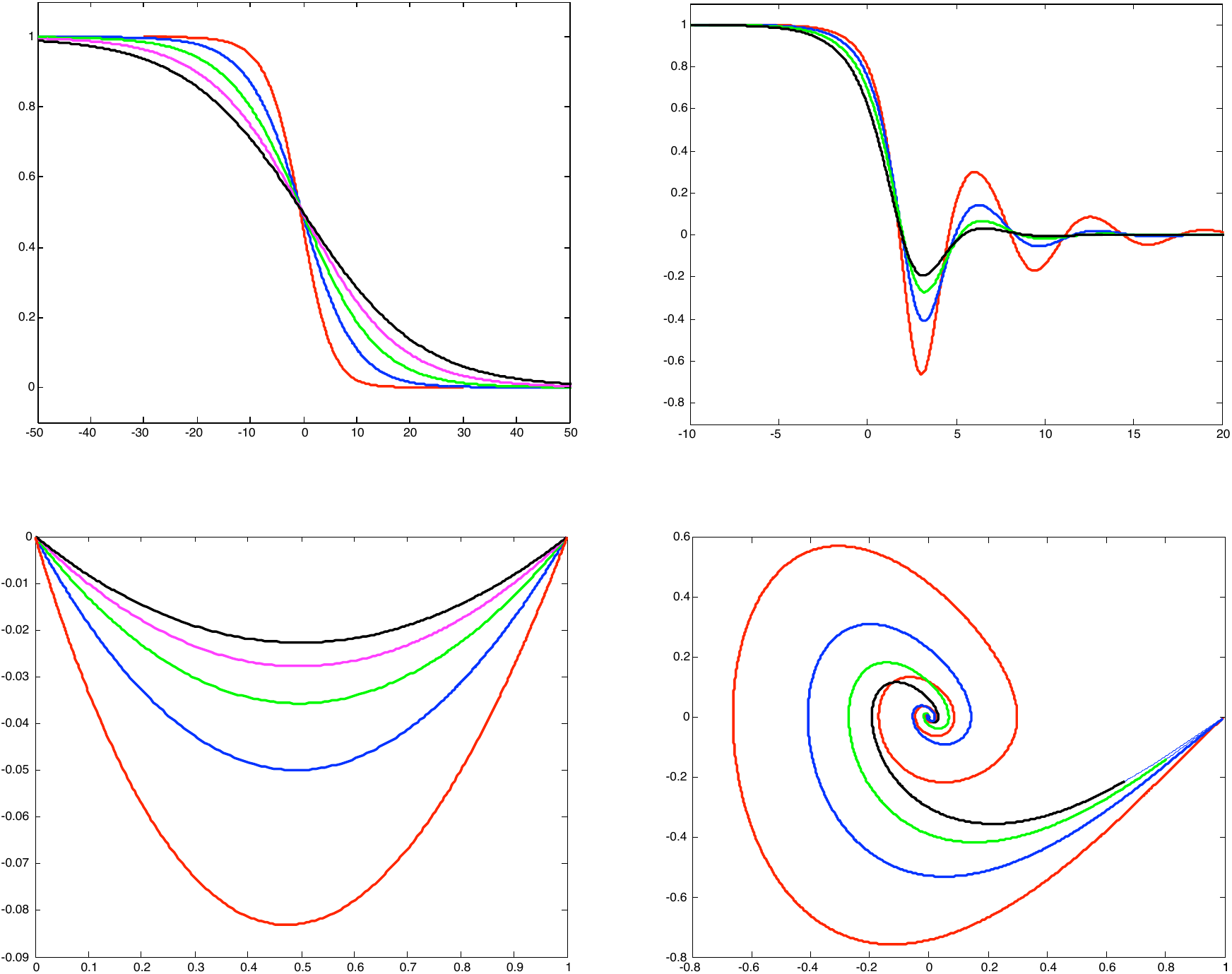}
{\color{black}\caption{Phase portraits and wave profiles for travelling waves in the F-KPP equation for various values of the wave speed $c$. In the figures on the left, we have $c\geq 2 \sqrt{ \epsi}$, (the plots show $c = 3, 5, 7, 9,$ and $11$, with $\epsi = 1$), while on the right, $0\leq c < 2 \sqrt{\epsi}$ ($c = 0.4,0.6,0.8,$ and $1$, with $\epsi = 1$). The top figures are the wave profiles while the bottom figures show that they indeed form heteroclinic connections in the phase portrait of \cref{eq:odesys} and that the origin is a stable node when $c\geq2\sqrt{\epsi}$ and a stable focus when $0 < c < 2 \sqrt{\epsi}$. } \label{fig:kpptravellingwave}}

\end{figure}

\subsection{{\color{black}The spectral problem}} \label{sec:fkppspec}

Travelling wave solutions to \eqref{eq:fish} are {\em steady state} solutions to \eqref{eq:travellingpde}. Once a travelling wave $\hat{u}(z)$ is found for a fixed $c$, we wish to consider how \cref{eq:travellingpde} behaves relative to perturbations (in the moving frame) about the travelling wave. We make the ansatz $u(z,\tau) = \hat{u}(z) + p(z,\tau)$, with $p(z,\tau)$ in an appropriate Banach space, substitute into \cref{eq:travellingpde} and consider only the first-order perturbative terms to give the formal (linearised) equation for $p$: 
\begin{equation}\label{eq:linear}
p_\tau = \epsi p_{zz} + cp_z + (1- 2\hat{u})p\,.
\end{equation}

Let $\cH^1(\R)$ denote the usual Sobolev space of functions from $\R$ to $\R$ which are square integrable and with first (weak) derivative also being square integrable. We define the (linear) operator $\cL: \cH^1(\R)  \to \cH^1(\R)$ by $$ \cL p := \left(\epsi \partial_{zz} + c \partial_z + (1-2\hat{u})\right) p\,.$$
Letting $\I$ be the identity map on $\cH^1(\R)$, we have the following definition: 
\theoremstyle{definition}
\begin{definition} We say that a $\lambda \in \C$ is in the {\em spectrum} of the operator $\cL$ if the operator $\cL - \lambda \I$ is not invertible on (some dense subset of) $\cH^1(\R)$. The set of all such $\lambda \in \C$ will be denoted as $\sigma(\cL)$.
\end{definition}

The operator $\cL - \lambda \I$ on $\cH^1(\R)$ is equivalent to the operator $\cT(\lambda) : \cH^1(\R) \times L^2(\R) \to \cH^1(\R) \times L^2(\R)$ given by 
\begin{equation}\label{eq:operator}
\cT(\lambda) \begin{pmatrix} p \\ q \end{pmatrix}:= \left( \dfrac{\rd}{\rd z} - A(z;\lambda) \right) \begin{pmatrix} p \\ q \end{pmatrix} := 
\begin{pmatrix} p \\ q \end{pmatrix}' - \begin{pmatrix} 0 & 1 \\ \frac{\lambda - 1 + 2 \hat{u}}{\epsi} & \frac{-c}{\epsi} \end{pmatrix} \begin{pmatrix} p \\ q  \end{pmatrix}. 
\end{equation}
Here we have defined $' := \dfrac{\rd}{\rd z}$ and the matrix $A(z;\lambda)$, and we further define
$$
A_\pm(\lambda) : = \lim_{z\to\pm \infty}A(z;\lambda) = \begin{pmatrix} 0 & 1 \\ \frac{\lambda \mp 1 }{\epsi} & \frac{-c}{\epsi} \end{pmatrix}\,.
$$ 
The (spatial) eigenvalues of $A_+(\lambda)$ are
\begin{equation}
\mu_+^u(\lambda) := \frac{-c + \sqrt{c^2 + 4\epsi(\lambda - 1)}}{2 \epsi} \,, \quad \mu_+^s(\lambda) := \frac{-c - \sqrt{c^2 + 4\epsi(\lambda-1)}}{2 \epsi}\,,
\end{equation}
and those of $A_-(\lambda)$ are 
\begin{equation}
\mu_-^u(\lambda) := \frac{-c + \sqrt{c^2 + 4\epsi(\lambda + 1)}}{2 \epsi} \,, \qquad \mu_-^s(\lambda) := \frac{-c - \sqrt{c^2 + 4\epsi(\lambda+1)}}{2 \epsi}\,.
\end{equation}
As it will often be convenient, when there is no ambiguity we will drop the arguments in the eigenvalues of the matrices $A_\pm(\lambda)$, writing instead $\mu_\pm^{s,u}$ as appropriate. We remark also that in the case that $\lambda = 0$, we have that $\mu^{s,u}_-(0) = \mu^{s,u}_1$, and that $\mu^{s,u}_+(0) = \mu^{ss,s}_0$ from before. Further, we have that the (spatial) eigenvectors of $A_\pm(\lambda)$ are 
$( 1, \mu_\pm^{u,s})^\top$. Lastly, we denote the subspaces spanned by the various eigenvectors of $A_\pm(\lambda)$ as $\xi^{u,s}_\pm (\lambda)$, respectively, again with the possibility of dropping the argument when convenient.  

\subsection{The Continuous Spectrum} \label{sec:fkppcont}We claim that the spectrum of the operator $\cL$ naturally falls into two parts: the continuous spectrum and the point spectrum. {\color{black} The point spectrum will be values $\lambda \in \sigma(\cL)$ such that $(\cL-\lambda)$ is a Fredholm operator of index zero. The continuous spectrum will be the complement of the point spectrum (in $\sigma(\cL)$).}
For the description of the continuous spectrum we follow \cite{sand02,sandscheel00}, while in order to best describe the point spectrum of $\cL$ we follow \cite{agj90, jones84, pegowein92}. There is no discrepancy with our choices, however, and equivalent statements for the point and continuous spectrum are found in all of \cite{jones84, kapitula2013spectral, pegowein92, sand02, sandscheel00}. We refer the reader to \cite{kapitula2013spectral, sand02} for a rigorous proof of the equivalence of all such definitions as well as the fact that our definition of the spectrum can indeed be broken up into the sets defined as the point and continuous spectrum, as given below. 

\begin{definition}\label{def:sig}
We recall that the {\em signature} of a matrix $M$, is the triple $(n_1, n_2, n_3)$ where the $n_j$'s are the dimensions of the positive, negative and null space of $M$ respectively. The signature will either be denoted by a triple of integers (e.g, $(2,1,0)$), or by an explicit list of the signs of the eigenvalues (e.g. $(+,+,-)$). See \Cref{fig:keller-segel-cont-spec}. 
\end{definition}

\begin{definition} \label{def:essential}
We define the {\em continuous spectrum} of the operator $\cL$, denoted $\sige(\cL)$ or sometimes just $\sige$, to be the set (in $\C$) of those $\lambda$ {\color{black} for which the signatures of $A_+(\lambda)$ and $A_-(\lambda)$ are not equal.}
\end{definition}
\noindent We note that one can track the real part of the eigenvalues of $A_\pm(\lambda)$ and that only one of the signs of $\re{\mu^u_\pm}$ will change {\color{black} as $\lambda$ is varied}. Further, in order for the sign of $\re{\mu^u_\pm(\lambda)}$ to change, there must be a $\lambda$ where $\re{\mu_\pm^u(\lambda)} =0$. Writing $\dfrac{\omega}{| \omega|}$ for the sign of a real number $\omega$, we have the following:
\begin{corollary}\label{cor:essential} The set $\sige(\cL)$ can be written as
$$
\sige{(\cL)} := \overline{ \left\{\lambda \in \C \quad  \bigg{|} \quad   \frac{\re{\mu_-^u }}{| \re{\mu_-^u}|}  \neq \frac{\re{\mu_+^u }}{| \re{\mu_+^u}|} \right\}}\,.
$$
\end{corollary}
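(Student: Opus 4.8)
The plan is to reduce the signature comparison to tracking the \emph{single} eigenvalue $\mu^u_\pm(\lambda)$. First I would record that $\re{\mu^s_\pm(\lambda)}<0$ for \emph{every} $\lambda\in\C$: with the principal branch of the square root one has $\re{\sqrt{c^2+4\epsi(\lambda\mp 1)}}\geq 0$, so $\re{\mu^s_\pm}=\dfrac{-c-\re{\sqrt{c^2+4\epsi(\lambda\mp1)}}}{2\epsi}\leq \dfrac{-c}{2\epsi}<0$. Hence the signature of $A_\pm(\lambda)$ (read off from the signs of the real parts of its eigenvalues, as in \Cref{def:sig}) is completely determined by $\re{\mu^u_\pm}$: it is $(1,1,0)$, $(0,2,0)$ or $(0,1,1)$ according as $\re{\mu^u_\pm}$ is positive, negative, or zero. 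Therefore the signatures of $A_+(\lambda)$ and $A_-(\lambda)$ agree if and only if $\re{\mu^u_+}$ and $\re{\mu^u_-}$ have the same sign (counting $0$ as its own sign), and by \Cref{def:essential} this gives
\[
\sige(\cL)=T\cup P_+\cup P_-,\qquad
T:=\bigl\{\lambda\in\C:\ \re{\mu^u_+}\,\re{\mu^u_-}\neq 0,\ \ \mathrm{sgn}\,\re{\mu^u_+}\neq \mathrm{sgn}\,\re{\mu^u_-}\bigr\},
\]
where $P_\pm:=\{\lambda\in\C:\re{\mu^u_\pm(\lambda)}=0\}$.

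Second, I would identify the right-hand side with $\overline{T}$, which is exactly what the corollary asserts (the set written there is $T$, the quotients $\re{\mu^{u}_\pm}/|\re{\mu^{u}_\pm}|$ being the functions $\mathrm{sgn}\,\re{\mu^u_\pm}$, which are defined precisely off $P_+\cup P_-$). On $\C\setminus(P_+\cup P_-)$ the pair $(\mathrm{sgn}\,\re{\mu^u_+},\mathrm{sgn}\,\re{\mu^u_-})$ is continuous, hence locally constant, so $T$ is relatively clopen there and $\partial T\subseteq P_+\cup P_-$; this yields $\overline{T}=T\cup\partial T\subseteq T\cup P_+\cup P_-=\sige(\cL)$. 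For the reverse inclusion it suffices to show $P_\pm\subseteq\overline{T}$. Solving $\re{\mu^u_+(\lambda)}=0$ explicitly (set $\sqrt{c^2+4\epsi(\lambda-1)}=c+it$ with $t\in\R$ and square) shows $P_+$ is the leftward-opening parabola $\lambda=1-\tfrac{t^2}{4\epsi}+\tfrac{ict}{2\epsi}$, and likewise $P_-$ is its translate with vertex $\lambda=-1$; in particular $P_+\cap P_-=\emptyset$. Hence $\re{\mu^u_-}$ is nonzero on $P_+$, and since $(\mu^u_+)'$ never vanishes the level curve $P_+$ is regular, so $\re{\mu^u_+}$ takes both signs in every neighbourhood of a point of $P_+$; thus each such point lies in $\overline{T}$, i.e. $P_+\subseteq\overline{T}$, and symmetrically $P_-\subseteq\overline{T}$. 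Combining the two inclusions gives $\sige(\cL)=\overline{T}$, which is the displayed formula.

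The main thing to get right is the bookkeeping on the measure-zero locus $P_+\cup P_-$: one must verify that the two ``non-hyperbolicity'' parabolas are disjoint, so that each jump of the signature across one of them is unambiguous and is in fact accumulated by the open region $T$ where the unstable dimensions of $A_+$ and $A_-$ genuinely differ; this is precisely why the closure in the statement recovers \Cref{def:essential} exactly, and it is the only place the explicit form of the $\mu_\pm^{u,s}$ is needed. The only other ingredient is the uniform strict bound $\re{\mu^s_\pm}<0$, i.e. the observation (flagged just above the corollary) that of the two spatial eigenvalues of each $A_\pm(\lambda)$ only the $u$-branch can cross the imaginary axis as $\lambda$ varies.
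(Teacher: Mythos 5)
Your argument is correct and follows essentially the same route as the paper, which records only the key observation (in the sentence immediately preceding the corollary) that of the two spatial eigenvalues of each $A_\pm(\lambda)$ only the $u$-branch can cross the imaginary axis, i.e.\ $\re{\mu^s_\pm}<0$ for all $\lambda$, and leaves the rest implicit. Your extra bookkeeping --- the disjointness of the two parabolas $P_\pm$ and the open-mapping/harmonicity argument giving $P_\pm\subseteq\overline{T}$ so that the closure recovers exactly \Cref{def:essential} --- correctly supplies the details the paper omits.
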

The equations defining the boundary of the continuous spectrum are important in their own right and are the so-called {\em dispersion relations}.  {\color{black}These are where at least one of the eigenvalues of $A_+(\lambda) $ or $A_-(\lambda)$ is purely imaginary and are} given parametrically by
\begin{equation}\label{eq:dispersion}
\lambda = - \epsi k^2 \pm 1 + i c k \quad \textrm{for } k \in \R. 
\end{equation} 
Here $ik$ would be the imaginary eigenvalue of $A_\pm(\lambda)$. 
This describes two parabolas, opening leftward and intersecting the real axis at $\pm 1$. The complex plane minus the continuous spectrum is composed of two disjoint sets: $\C \setminus \sigma_{\textrm{c}} = \Omega_1\sqcup \Omega_2$. We define the sets in accordance with Definition \ref{def:essential} (see Figure \ref{fig:essential}):
\begin{eqnarray*}
\Omega_1 & := & \left\{ \lambda \in \C \setminus \sige |  0< \re{\mu_+^u(\lambda)} < \re{\mu_-^u(\lambda)} \right\}\,, \\
\Omega_2 & := & \left\{ \lambda \in \C \setminus \sige | \re{\mu_+^u(\lambda)} < \re{\mu_-^u(\lambda) }<0 \right\}\,. 
\end{eqnarray*}

\subsection{Eigenvalues} \label{sec:fkppeig} For a $\lambda \not \in \sigma_{\textrm{c}}$ we ask whether there are any nontrivial functions in the kernel of $\cT(\lambda)$. That is, can we find a nontrivial solution in $\cH^1(\R) \times L^2(\R)$ to the first order system 
\begin{equation}\label{eq:system}
\begin{pmatrix} p \\ q \end{pmatrix}' = \begin{pmatrix} 0 & 1 \\ \frac{\lambda - 1 + 2 \hat{u}}{\epsi} & \frac{-c}{\epsi} \end{pmatrix} \begin{pmatrix} p \\ q  \end{pmatrix} = A(z;\lambda) \begin{pmatrix} p \\ q \end{pmatrix}\, ?
\end{equation}
Any such solution must decay to $0$ as $z \to \pm \infty$ {\color{black} and as the next proposition illustrates, there is only one way that this can be realised.} 

\begin{figure}[t]
\centering
\includegraphics[scale=1]{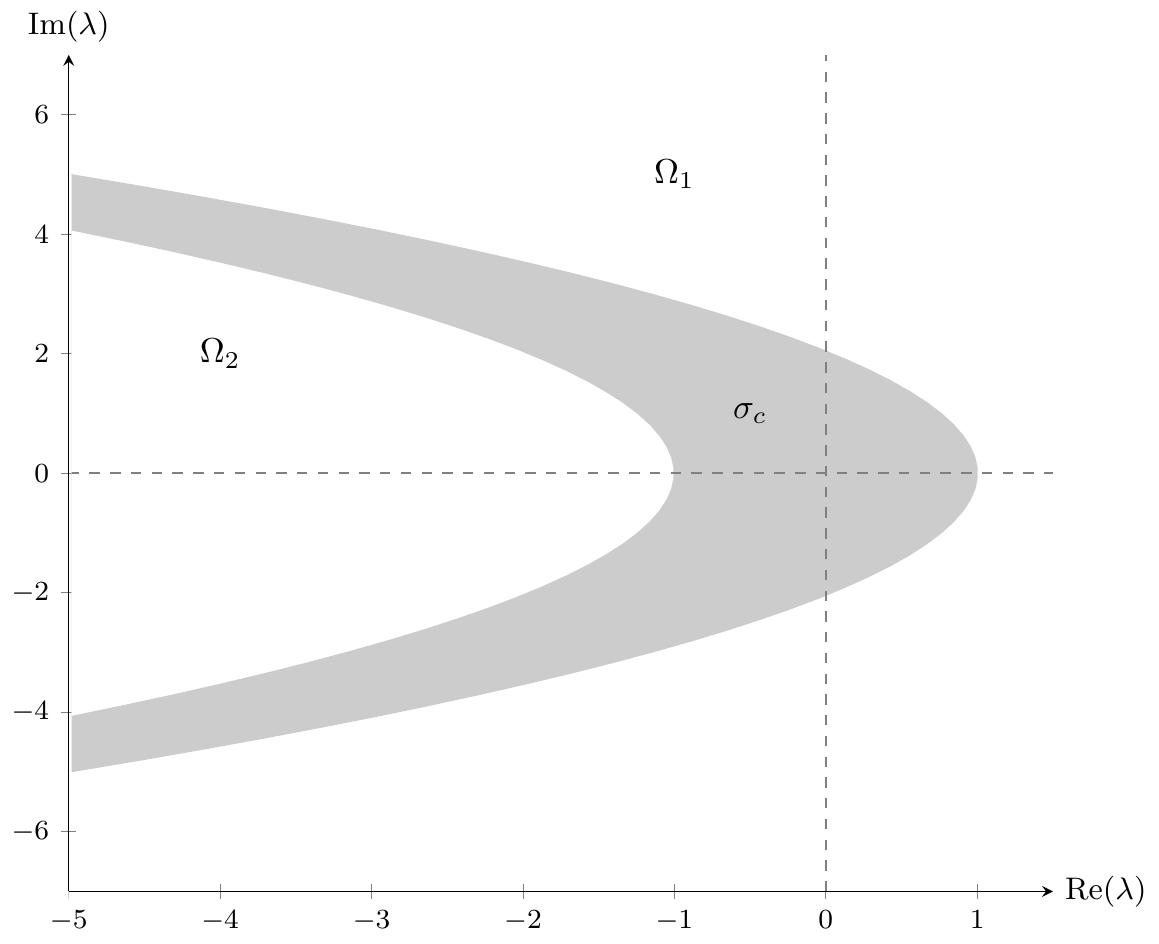}
\caption{The continuous spectrum of the linearised operator around a travelling wave (c.f. \cref{eq:operator} and \Cref{def:essential}). The boundary is given by \cref{eq:dispersion}. For the above picture we chose $\delta =1$ and $c = \dfrac{5}{\sqrt{6}}$. In $\Omega_1$ we have $0< \re{\mu_+^u} < \re{\mu_-^u}$, in $\Omega_2$ we have $\re{\mu_+^u} < \re{\mu_-^u}<0$ and in $\sigma_{\textrm{c}}$ we have $\re{\mu_+^u}<0 < \re{\mu_-^u}$.}
\label{fig:essential}
\end{figure}

\begin{proposition}\label{prop:geometric} For $\lambda \in \C \setminus \sige$, if $( p, q )$ is a solution to \eqref{eq:system} such that $( p, q ) \in \cH^1(\R) \times L^2(\R)$ then 
\begin{equation}
\lim_{z \to -\infty} \begin{pmatrix} p \\ q \end{pmatrix} \to \xi^u_- \quad \text{and} \quad
\lim_{z \to \infty} \begin{pmatrix} p \\ q \end{pmatrix} \to \xi^s_+ \, .
\end{equation}
That is, $( p, q )$ decays to the stable subspace $\xi^s_+$ of $A_+(\lambda)$ as $z \to + \infty$ and the unstable subspace $\xi_-^u$ of $A_-(\lambda)$ as $z \to -\infty$. 
\end{proposition}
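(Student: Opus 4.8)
The plan is to read the asymptotic behaviour of solutions of \eqref{eq:system} off the constant-coefficient limiting systems $(p,q)' = A_\pm(\lambda)(p,q)$ and then transfer it to the variable-coefficient equation by a standard asymptotic-integration argument. First I would note that the only $z$-dependence of $A(z;\lambda)$ sits in the entry $(\lambda - 1 + 2\hat u(z))/\epsi$, so $A(z;\lambda) - A_+(\lambda) = O(\hat u(z))$ as $z\to+\infty$ and $A(z;\lambda) - A_-(\lambda) = O(1-\hat u(z))$ as $z\to-\infty$. By the phase-plane analysis of \Cref{sec:fkpp}, the heteroclinic $\hat u$ approaches the hyperbolic equilibria $(1,0)$ and $(0,0)$ along their eigendirections, so $1-\hat u(z)\to 0$ and $\hat u(z)\to 0$ exponentially fast; hence $A(z;\lambda)$ converges to $A_\pm(\lambda)$ exponentially, in particular $\int^{\pm\infty}\|A(z;\lambda)-A_\pm(\lambda)\|\,\rd z<\infty$.

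Next I would use that for $\lambda\in\C\setminus\sige$ neither $A_+(\lambda)$ nor $A_-(\lambda)$ has an eigenvalue on the imaginary axis: otherwise $\lambda$ would satisfy one of the dispersion relations \eqref{eq:dispersion} and hence lie in $\sige$. So both $A_\pm(\lambda)$ are hyperbolic, and away from the two isolated branch points $c^2+4\epsi(\lambda\mp1)=0$ they have simple eigenvalues $\mu_\pm^{u},\mu_\pm^{s}$ with one-dimensional eigenspaces $\xi_\pm^{u},\xi_\pm^{s}$; on $\Omega_1$ one has $\re{\mu_\pm^{s}}<0<\re{\mu_\pm^{u}}$, while on $\Omega_2$ all four spatial eigenvalues have negative real part (see \Cref{fig:essential}). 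Combining hyperbolicity with the integrable convergence of $A(z;\lambda)$, Levinson's theorem (equivalently, roughness of exponential dichotomies; see \cite{pegowein92,sand02}) yields near $z=+\infty$ a basis $\phi_+^{u},\phi_+^{s}$ of solutions of \eqref{eq:system} with $e^{-\mu_+^{u}z}\phi_+^{u}(z)\to(1,\mu_+^{u})^\top$ and $e^{-\mu_+^{s}z}\phi_+^{s}(z)\to(1,\mu_+^{s})^\top$ as $z\to\infty$, and symmetrically a basis $\phi_-^{u},\phi_-^{s}$ near $z=-\infty$.

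Now write $(p,q)=a\,\phi_+^{u}+b\,\phi_+^{s}$ for large $z$. On $\Omega_1$, $|\phi_+^{u}(z)|$ grows like $e^{\re{\mu_+^{u}}z}$ with $\re{\mu_+^{u}}>0$, so membership of $(p,q)$ in $L^2$ near $+\infty$ forces $a=0$; then $(p,q)(z)=b\,e^{\mu_+^{s}z}\big((1,\mu_+^{s})^\top+o(1)\big)$ decays to $0$ asymptotically tangent to $\xi_+^{s}$, which is the asserted limit. Repeating the argument at $-\infty$ with $(p,q)=\tilde a\,\phi_-^{u}+\tilde b\,\phi_-^{s}$: since $\re{\mu_-^{s}}<0$ the mode $\phi_-^{s}$ blows up as $z\to-\infty$, so $L^2$-membership near $-\infty$ forces $\tilde b=0$ and $(p,q)(z)$ is asymptotically tangent to $\xi_-^{u}$. (On $\Omega_2$ every mode of $A_-(\lambda)$ blows up as $z\to-\infty$, so the same step forces $(p,q)\equiv0$ and the statement is vacuous.)

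I expect the only genuine work to be the asymptotic-integration step: verifying the exponential (hence integrable) convergence $A(z;\lambda)\to A_\pm(\lambda)$ and invoking the Levinson/dichotomy-roughness result to obtain the stated exponential asymptotics of solutions. The one technical nuisance is the degenerate case at a branch point $\mu_\pm^{u}=\mu_\pm^{s}$, where $A_\pm(\lambda)$ is a Jordan block and the asymptotic modes carry a polynomial prefactor; but $\xi_\pm^{u}=\xi_\pm^{s}$ is still one-dimensional, so the conclusion is unchanged (alternatively one perturbs $\lambda$ and uses continuity). The rest is bookkeeping with the signs of $\re{\mu_\pm^{u,s}}$ on $\Omega_1$ and $\Omega_2$ recorded in \Cref{fig:essential}.
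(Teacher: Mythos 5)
Your argument is correct and follows essentially the same route as the paper, which only sketches the reasoning (off $\sige$ both $A_\pm(\lambda)$ are hyperbolic, so an $L^2$ solution must leave along $\xi_-^u$ and enter along $\xi_+^s$) and defers the rigorous version to \cite{jones84, kapitula2013spectral, pegowein92}. Your Levinson/exponential-dichotomy argument, resting on the exponential convergence of $\hat u$ to its hyperbolic rest states, is precisely how those references make the sketch rigorous, and your handling of $\Omega_2$ and of the branch points matches the paper's remarks.
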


A rigorous proof of this proposition can be found in \cite{jones84, kapitula2013spectral, pegowein92}. An intuitive reasoning behind why the proposition should be true is the following: For a $\lambda \in \Omega_1$, as $z \to \infty$, the system \eqref{eq:system} behaves like 
$$ 
\begin{pmatrix} p \\ q \end{pmatrix}' = A_+(\lambda) \begin{pmatrix} p \\ q \end{pmatrix}
$$
and since we are in the region $\Omega_1$, we have only one stable direction. Thus, if $(p, q) \to 0$, it must do so along the direction of the stable subspace $\xi^s_+$. The same is true as $z \to - \infty$ because we must have that the solution decays to zero along the subspace $\xi^u_-$. This argument also shows that for any $\lambda \in \Omega_2$ no solutions decay to zero as $z \to - \infty$. 
\begin{definition}\label{def:eigenvalue}
We will say that $\lambda \in \Omega_1$ is a (temporal) {\em eigenvalue} with {\em eigenfunction} $p$ if we can find such a solution to \eqref{eq:system} which is in $\cH^1(\R) \times L^2(\R)$. 
\end{definition}

We next exploit the linearity of \eqref{eq:system}. For a fixed $\lambda \in \Omega_1$ and for each $z \in \R$, let $\Xi^u(z;\lambda)$ be the linear subspace of solutions which decay to $\xi^u_-$ as $z \to -\infty$ and let $\Xi^s(z;\lambda)$ be the linear subspace of solutions which decay to $\xi^s_+$ as $z \to +\infty$. We note that in our example we can (for any fixed $\lambda$) view $\Xi^u(z;\lambda)$ and $\Xi^s(z;\lambda)$ as (line) bundles over $\R$. This justifies calling $\Xi^u(z;\lambda)$ `the unstable manifold' and $\Xi^s(z;\lambda)$ `the stable manifold'. What we mean by this is that $\Xi^u(z;\lambda)$ is the manifold of solutions that decay as $z \to - \infty$ to the unstable subspace of $A_-(\lambda)$ (and similarly for $\Xi^s(z;\lambda)$). We can evaluate $\Xi^u$ and $\Xi^s$ at a fixed value $z_0$ and if they are linearly dependent then we will have an eigenvalue. This is because of uniqueness of solutions to ODEs; if they agree at one $z_0$ then they must agree for all $z \in \R$ and so we have (a linear subspace) of solutions which decay as $z \to \pm \infty$.

Let \begin{equation*}w^u(z;\lambda) = \begin{pmatrix} w_1^u(z;\lambda) \\ w^u_2(z;\lambda) \end{pmatrix} \quad\text{and}\quad w^s(z;\lambda) =  \begin{pmatrix} w_1^s(z;\lambda) \\ w^s_2(z;\lambda) \end{pmatrix}\end{equation*} be two solutions in $\Xi^u$ and $\Xi^s$, respectively. These are two vectors in $\C^2$ and we know that $\lambda$ is an eigenvalue if and only if they are linearly dependent for some (and hence every) $z_0 \in \R$. For convenience, we choose $z_0 = 0$. 
We have therefore shown the following: 
\begin{proposition} \label{prop:defevans}The complex number $\lambda \in \Omega_1$ is an eigenvalue if and only if 
\begin{equation}
D(\lambda) := \det{ \begin{pmatrix}  w_1^u(0;\lambda) & w_1^s(0;\lambda) \\ w_2^u(0;\lambda) & w_2^s(0;\lambda) \end{pmatrix} } = 0. 
\end{equation}
\end{proposition}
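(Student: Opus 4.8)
The plan is to unwind the definitions and let Proposition~\ref{prop:geometric} and uniqueness of solutions of linear ODEs do the work; indeed most of what is needed has already been assembled in the discussion preceding the statement. By \Cref{def:eigenvalue}, $\lambda \in \Omega_1$ is an eigenvalue precisely when \eqref{eq:system} admits a nontrivial solution $(p,q) \in \cH^1(\R) \times L^2(\R)$, so the first step is to recast membership in $\cH^1(\R) \times L^2(\R)$ in purely dynamical terms: I would show that, for $\lambda \in \Omega_1$, a solution of \eqref{eq:system} lies in $\cH^1(\R) \times L^2(\R)$ if and only if it lies in $\Xi^u(z;\lambda) \cap \Xi^s(z;\lambda)$ for every (equivalently, some) $z$. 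One direction is exactly Proposition~\ref{prop:geometric}: an $\cH^1 \times L^2$ solution must decay to $\xi^u_-$ as $z \to -\infty$ and to $\xi^s_+$ as $z \to +\infty$, hence belongs to both $\Xi^u$ and $\Xi^s$. For the converse, a solution in $\Xi^u(z;\lambda) \cap \Xi^s(z;\lambda)$ decays exponentially as $z \to \pm\infty$ — the rates governed by $\mu_-^u$ and $\mu_+^s$, which retain the correct sign of real part throughout $\Omega_1$ since $\lambda \notin \sige$ — and exponential decay of $(p,q)$, together with $p' = q$ and $q'$ being controlled pointwise by $(p,q)$ through the bounded matrix $A(z;\lambda)$, forces $(p,q) \in \cH^1(\R) \times L^2(\R)$.

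Next I would record the dimension count. For $\lambda \in \Omega_1$ the matrix $A_-(\lambda)$ is hyperbolic with one-dimensional unstable subspace $\xi^u_-$, and $A_+(\lambda)$ is hyperbolic with one-dimensional stable subspace $\xi^s_+$ (the trace of each being $-c/\epsi < 0$); standard asymptotic theory for linear systems with asymptotically constant coefficients — exponential dichotomies on each half-line, as in the references cited for Proposition~\ref{prop:geometric} — then gives that, for each fixed $z$, $\Xi^u(z;\lambda)$ and $\Xi^s(z;\lambda)$ are one-dimensional subspaces of $\C^2$. The solutions $w^u(\cdot;\lambda)$ and $w^s(\cdot;\lambda)$ are by construction nontrivial elements of $\Xi^u$ and $\Xi^s$; since a solution of a linear ODE vanishing at one point vanishes identically, $w^u(0;\lambda) \neq 0$ and $w^s(0;\lambda) \neq 0$, so these two vectors span $\Xi^u(0;\lambda)$ and $\Xi^s(0;\lambda)$ respectively. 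Assembling: $\lambda \in \Omega_1$ is an eigenvalue iff $\Xi^u(0;\lambda) \cap \Xi^s(0;\lambda) \neq \{0\}$, which — both being lines through $w^u(0;\lambda)$, resp.\ $w^s(0;\lambda)$ — holds iff these vectors are linearly dependent in $\C^2$, i.e.\ iff the $2 \times 2$ matrix with them as columns is singular, i.e.\ iff $D(\lambda) = 0$. That the choice $z_0 = 0$ is immaterial follows once more from uniqueness: if the two lines coincide at some $z_0$, the corresponding solutions agree there and hence for all $z$, so the intersection is nontrivial at one $z$ iff at all $z$.

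The main obstacle — in fact the only point beyond bookkeeping — is the converse half of the first step, namely upgrading ``lies in $\Xi^u \cap \Xi^s$'' to genuine $\cH^1(\R) \times L^2(\R)$ membership, which relies on the \emph{uniform} exponential decay furnished by the exponential dichotomy on each half-line (valid precisely because $\lambda \notin \sige$), rather than on mere convergence of the solution to the relevant eigendirection. Everything else reduces to hyperbolicity of $A_\pm(\lambda)$, a rank count, linear algebra in $\C^2$, and uniqueness of solutions of linear ODEs.
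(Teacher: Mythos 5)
Your argument is correct and follows essentially the same route as the paper: Proposition~\ref{prop:geometric} forces an eigenfunction into $\Xi^u\cap\Xi^s$, uniqueness of solutions makes linear dependence at $z_0=0$ equivalent to dependence everywhere, and the determinant of the two spanning vectors detects this. The only difference is that you explicitly justify the converse step (exponential decay from hyperbolicity of $A_\pm(\lambda)$ implies $\cH^1\times L^2$ membership), which the paper leaves implicit; that is a welcome but minor elaboration, not a different approach.
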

\begin{definition} \label{def:evans}
The function $D(\lambda)$ defined in Proposition \ref{prop:defevans} is called {\em an  Evans function}. 
\end{definition}

\subsection{The Riccati equation} \label{sec:riccati}
For Definition \ref{def:evans}, we only compare two possible appropriately decaying solutions to the ODE \eqref{eq:system}.
In the following, we are interested in whether or not a pair of {\em subspaces} intersect, rather than the particulars of any given solution. 
\begin{definition}The set of (complex) one-dimensional subspaces in complex two-space is called {\em complex projective space} and is denoted $\C P^1$.
\end{definition}
Complex projective space can be given the structure of a complex manifold of one complex dimension and is topologically equivalent to the Riemann sphere, which we denote by $S^2$. A line in $\C^2$ through the origin is determined by a pair of complex numbers denoted $[p:q]$ that are not both zero. We can write down all the lines where $p \neq 0$ as $[1: \eta ]$ and we see right away that this is (equivalent to) a copy of the complex plane. Similarly, we write all the lines where $ q \neq 0$ as $[\tau:1]$ and so this too is equivalent to a copy of the complex plane.
Further, for any line except for two (where $p$ or $q = 0$), we have that $\eta = \dfrac{1}{\tau}.$ These are the typical {\em charts} on $\C P^1$. {\color{black} For a given two-dimensional system of linear first-order ODEs, we get an equivalent (nonlinear, non-autonomous) flow on $\C P^1$: the so-called {\em Riccati equation}}. 

We obtain an expression (on each chart) for the Riccati equation by simply differentiating the defining relations of $\eta$ and $\tau$ and using \cref{eq:system}.  We get
\begin{equation}\label{eq:riccati}
\begin{split}
\eta' &= \left( \frac{q}{p} \right)' =  \frac{1}{\epsi}\left(\lambda - 1 + 2 \hat{u} \right) - \frac{c}{\epsi}\eta - \eta^2\,, \\ 
\tau' &=  \left( \frac{1}{\eta} \right)' = \frac{-\eta'}{\eta^2} =   1 +  \frac{c}{\epsi}\tau -  \frac{1}{\epsi}\left(\lambda - 1 + 2 \hat{u} \right)\tau^2\,,
\end{split}
\end{equation}
being two first-order non-autonomous nonlinear ODEs. Further, we have that $\xi^u_\pm$ and $\xi^s_\pm$ will be {\em fixed points} of these systems. To see this in coordinates, we have that in the $\eta$ chart, $\xi^{u,s}_\pm$ is given by the eigenvalues $\mu^{u,s}_\pm$, while in the $\tau $ chart, they are the multiplicative inverses, a feature of {\color{black} \cref{eq:system} } that will not in general be true for an arbitrary {\color{black} two-dimensional system of first order ODEs.} We also have that $\lambda$ will be an eigenvalue if and only if we can find a heteroclinic connection between $\xi^u_-$ and $\xi^s_+$. In terms of the $\eta$ chart, this is a heteroclinic connection between $\mu^u_-$ and $\mu^s_+$ (and between their multiplicative inverses in the $\tau$ chart). 

\begin{remark} By writing out the real and imaginary parts of the flow in the $\eta$ and $\tau$ charts and by considering the flow direction on the real axis, one can show that there cannot be a heteroclinic connection in the case of a spectral parameter with non-zero imaginary part. Further, by applying techniques used in \cite{rmckrtj12} one can similarly show that there are no real, positive eigenvalues. In Section \ref{sec:appendix}, we exploit this idea to prove the absence of eigenvalues in the case of travelling waves in the F-KPP equation.
\end{remark}

We determine a related Evans function by letting $\eta^u(z;\lambda)$ and $\eta^s(z;\lambda)$ be the solutions which decay to $\xi^u_-$ and $\xi^s_+$, respectively, in the $\eta$ chart. Moreover, suppose that $\eta^{s,u}(z;\lambda)$ is finite for all $z \in \R$. This corresponds to $w_1^{s,u}(z;\lambda)$ being non-zero or staying in a single chart. This requirement is not necessary and we discuss what happens {\color{black} (see \cref{subsec:charts})} if we need to leave the chart, below, but we include it here for convenience. We define a new function 
\begin{equation}\label{eq:defeeta}
\begin{split}
E_\eta(\lambda) := &  \frac{D(\lambda)}{w^u_1(0;\lambda)w^s_1(0;\lambda)} \\
  = &  \frac{w_1^u(0;\lambda)w_2^s(0;\lambda) - w_2^u(0;\lambda)w_1^s(0;\lambda)}{w^u_1(0;\lambda)w^s_1(0;\lambda)}\\ 
 = & \eta^s(0;\lambda) - \eta^u(0;\lambda)\,.
\end{split}
\end{equation}
We define the functions $\tau^u(z;\lambda)$ and $\tau^s(z;\lambda)$ and the corresponding Evans function in the $\tau$ chart similarly. Here, though, we have 
\begin{equation}
\begin{split}
E_\tau(\lambda) := &  \frac{D(\lambda)}{w^u_2(0;\lambda)w^s_2(0;\lambda)} \\
  = &  \frac{w_1^u(0;\lambda)w_2^s(0;\lambda) - w_2^u(0;\lambda)w_1^s(0;\lambda)}{w^u_2(0;\lambda)w^s_2(0;\lambda)}\\ 
 = & \tau^u(0;\lambda) - \tau^s(0;\lambda)\,.
\end{split}
\end{equation}
Note that for $\lambda \in \Omega_1$, the function $E_{\eta}(\lambda)$ is zero if and only if $\eta^s(0;\lambda) = \eta^u(0;\lambda)$. By uniqueness of solutions to ODEs, we therefore have that $\eta^u(z;\lambda) = \eta^s(z;\lambda)$ for all $z \in \R$ and, hence, a heteroclinic connection between $\xi^s_+$ and $\xi^u_-$ exists. This will be true if and only if $\lambda \in \Omega_1$ is an eigenvalue. The same argument holds for $E_\tau(\lambda)$, i.e., we will have an eigenvalue if and only if $E_\tau(\lambda) =0$. 

We only need to calculate the $\eta$'s to compute the Evans functions. Since $\tau = \dfrac1\eta$, we have 
\begin{equation}
\begin{split}
E_\tau(\lambda) = &  \tau^u(0;\lambda) - \tau^s(0;\lambda) = \frac{1}{\eta^u(0;\lambda)} - \frac{1}{\eta^s(0;\lambda) }\\ 
= & \frac{\eta^s(0;\lambda) - \eta^u(0;\lambda)}{\eta^u(0;\lambda)\eta^s(0;\lambda)} \\
=&  \frac{E_\eta(\lambda)}{\eta^u(0;\lambda)\eta^s(0;\lambda)}\,,
\end{split}
\end{equation}
so knowing how to compute $E_\eta(\lambda)$ is enough to compute $E_\tau(\lambda)$. 

We are interested in where the function $D(\lambda)$, and hence $E_\eta(\lambda)$, is equal to zero in the region $\Omega_1$, assuming that $w_1^{u,s}(0;\lambda) \neq 0$ for any $\lambda \in \C$. To investigate this we exploit the analyticity (or continuity) of $E_\eta(\lambda)$ for $\lambda \in \Omega_1$. We appeal to a theorem from complex analysis (see, for example, \cite{conway73}), which says that if $f(\lambda)$ is a meromorphic function on some simply connected domain $\Omega \subseteq \C$ with no zeros or poles on a {\color{black} closed} curve $\gamma(t) \subseteq \Omega$ in the complex plane, then, letting $N$ denote the number of zeros of $f$ inside $\gamma(t)$ and $P$ 
denote the number of poles inside $\gamma(t)$, we have
\begin{equation}
N - P = \frac{1}{2\pi i} \oint_{\gamma} \frac{\dot{f}(\lambda)}{f(\lambda)} d \lambda\,,
\end{equation}
where $\dot{}$ denotes $\dfrac{d}{d\lambda}$. 

The assumption that $w_1^{u,s}(0;\lambda) \neq 0$ for any $\lambda \in \Omega_1$ means that we stay in a single chart for each $\lambda \in \Omega_1$. Thus, we have that $E_{\eta} (\lambda)$ will not just be meromorphic but {\em analytic}, that is $P \equiv 0$, leading to the following:
\begin{proposition} Let $\gamma(t):[0,1]\to \C $ be a simple closed curve in the complex plane, oriented counterclockwise and let $D(\lambda)$, $E_\eta(\lambda)$ and $w^{s,u}_1(0;\lambda)$ be defined as above. Suppose that $w^{s,u}_1(0;(\gamma(t))) \neq 0$ for all $t\in[0,1]$. Then,
$$ 
\oint_{\gamma(t)} \frac{\dot{E_{\eta}}(\lambda)}{E_{\eta}(\lambda)} d \lambda = \oint_{\gamma(t)}\frac{\dot{D}(\lambda)}{D(\lambda)} d \lambda\,.
$$
That is, the number of zeros of $E_{\eta}(\lambda)$ is the same as the number of zeros of the Evans function. 
\end{proposition}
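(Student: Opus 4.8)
The plan is to reduce everything to the argument-principle theorem quoted just above the statement by passing to logarithmic derivatives. Wherever the denominator does not vanish, \cref{eq:defeeta} gives the identity $E_\eta(\lambda) = D(\lambda)\bigl/\bigl(w_1^u(0;\lambda)\,w_1^s(0;\lambda)\bigr)$ between functions of $\lambda$; taking $\tfrac{d}{d\lambda}\log$ of both sides yields
\[
\frac{\dot{E_{\eta}}(\lambda)}{E_{\eta}(\lambda)} = \frac{\dot{D}(\lambda)}{D(\lambda)} - \frac{\dot{w_1^u}(0;\lambda)}{w_1^u(0;\lambda)} - \frac{\dot{w_1^s}(0;\lambda)}{w_1^s(0;\lambda)}
\]
at every $\lambda$ at which none of $E_\eta$, $D$, $w_1^u$, $w_1^s$ vanishes. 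Since $\gamma$ lies in a region of $\Omega_1$ where all of these objects are defined and (as recalled below) analytic, and since by hypothesis $w_1^{s,u}(0;\gamma(t))\neq 0$ along the curve while $D$ (hence $E_\eta$) has no zero on $\gamma$, this relation may be integrated over $\gamma$. The asserted identity $\oint_{\gamma(t)} \dot{E_{\eta}}/E_{\eta}\, d\lambda = \oint_{\gamma(t)} \dot{D}/D\, d\lambda$ is therefore equivalent to the vanishing of the two remaining contour integrals $\oint_{\gamma} \dot{w_1^{u}}(0;\lambda)/w_1^{u}(0;\lambda)\,d\lambda$ and $\oint_{\gamma} \dot{w_1^{s}}(0;\lambda)/w_1^{s}(0;\lambda)\,d\lambda$.

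To see that these vanish, I would use that $w_1^u(0;\lambda)$ and $w_1^s(0;\lambda)$ are \emph{analytic} functions of $\lambda$ on and inside $\gamma$: this is precisely the analytic line-bundle structure of $\Xi^u(z;\lambda)$ and $\Xi^s(z;\lambda)$ noted above, together with the analytic dependence of solutions of the linear system \eqref{eq:system} on the parameter $\lambda$ for $\lambda\in\Omega_1$. Being analytic, $w_1^{u,s}(0;\cdot)$ have no poles inside $\gamma$, so by the quoted theorem each of the two integrals equals $2\pi i$ times the number of zeros of $w_1^{u,s}(0;\cdot)$ enclosed by $\gamma$; by the standing assumption that $w_1^{u,s}(0;\lambda)\neq 0$ throughout $\Omega_1$ (made when $E_\eta$ was introduced, and echoed in the hypothesis of the proposition along $\gamma$ itself) there are no such zeros, and both integrals are zero. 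This establishes the displayed identity. Applying the argument-principle theorem one final time — using that $D(\lambda)$ and $E_\eta(\lambda)$ are analytic, hence pole-free, on $\Omega_1$ — each side equals $2\pi i$ times the corresponding zero count, so the number of zeros of $E_\eta$ inside $\gamma$ equals the number of zeros of the Evans function $D$ inside $\gamma$, as asserted.

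The only step that is more than formal bookkeeping is the analyticity and non-vanishing of $w_1^{u,s}(0;\lambda)$ inside $\gamma$, and this is where I expect the real work to sit. Analyticity is standard for $\lambda$ bounded away from the continuous spectrum, but one must be slightly careful at the branch points of the spatial eigenvalues $\mu_\pm^{u,s}$ (the zeros of $c^2+4\epsi(\lambda\mp 1)$) and, more to the point, about the normalization used to single out the decaying solutions $w^u$, $w^s$ from their one-dimensional solution spaces: fixing their asymptotics to the eigenvectors $(1,\mu_-^u)^\top$ and $(1,\mu_+^s)^\top$ must not itself create a zero or a singularity of the first component at $z=0$. In $\Omega_1$ the relevant eigenvalues $\mu_+^s$ and $\mu_-^u$ are simple with analytic spectral projections, so the decaying-solution spaces are genuine analytic line bundles admitting local nowhere-zero analytic sections, and the hypothesis $w_1^{s,u}(0;\gamma(t))\neq 0$ together with the paper's blanket non-vanishing assumption closes the gap; everything else is the logarithmic-derivative computation and two invocations of the argument principle.
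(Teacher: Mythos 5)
Your proof is correct and follows essentially the same route as the paper: log-differentiate the identity $D(\lambda) = w_1^u(0;\lambda)\, w_1^s(0;\lambda)\, E_\eta(\lambda)$, observe that the two extra contour integrals vanish because $w_1^{u,s}(0;\cdot)$ are analytic and (by the paper's standing assumption) non-vanishing throughout $\Omega_1$, and apply the argument principle --- this is exactly the $P\equiv 0$ specialisation of the identity the paper records as \cref{eq:mainthm} and proves in \Cref{th:cool}. You are also right to flag that the hypothesis as literally stated (non-vanishing of $w_1^{s,u}(0;\cdot)$ only on $\gamma$) must be supplemented by the blanket non-vanishing assumption inside $\gamma$, which is precisely how the paper frames the proposition in the preceding paragraph.
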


For every value of $\lambda \in \Omega_1$, the assumption that $w_1^{u,s}(0;\lambda) \neq0$ is consistent with the behaviour of numerical solutions to the F-KPP equation. Further, $N$ may be thought of as the {\em winding number} of $E_\eta(\lambda)$: the number of times $E_\eta(\lambda)$ (and hence $D(\lambda)$) winds around the origin (with a counter clockwise orientation), counted with sign, as we traverse $\gamma(t)$. Hence we can {\em visually} determine the number of zeros of $D(\lambda)$ in a closed contour in $\C$ in the case of F-KPP for quite large values of $\lambda$. 

\begin{figure}
\centering
\includegraphics[scale=0.93]{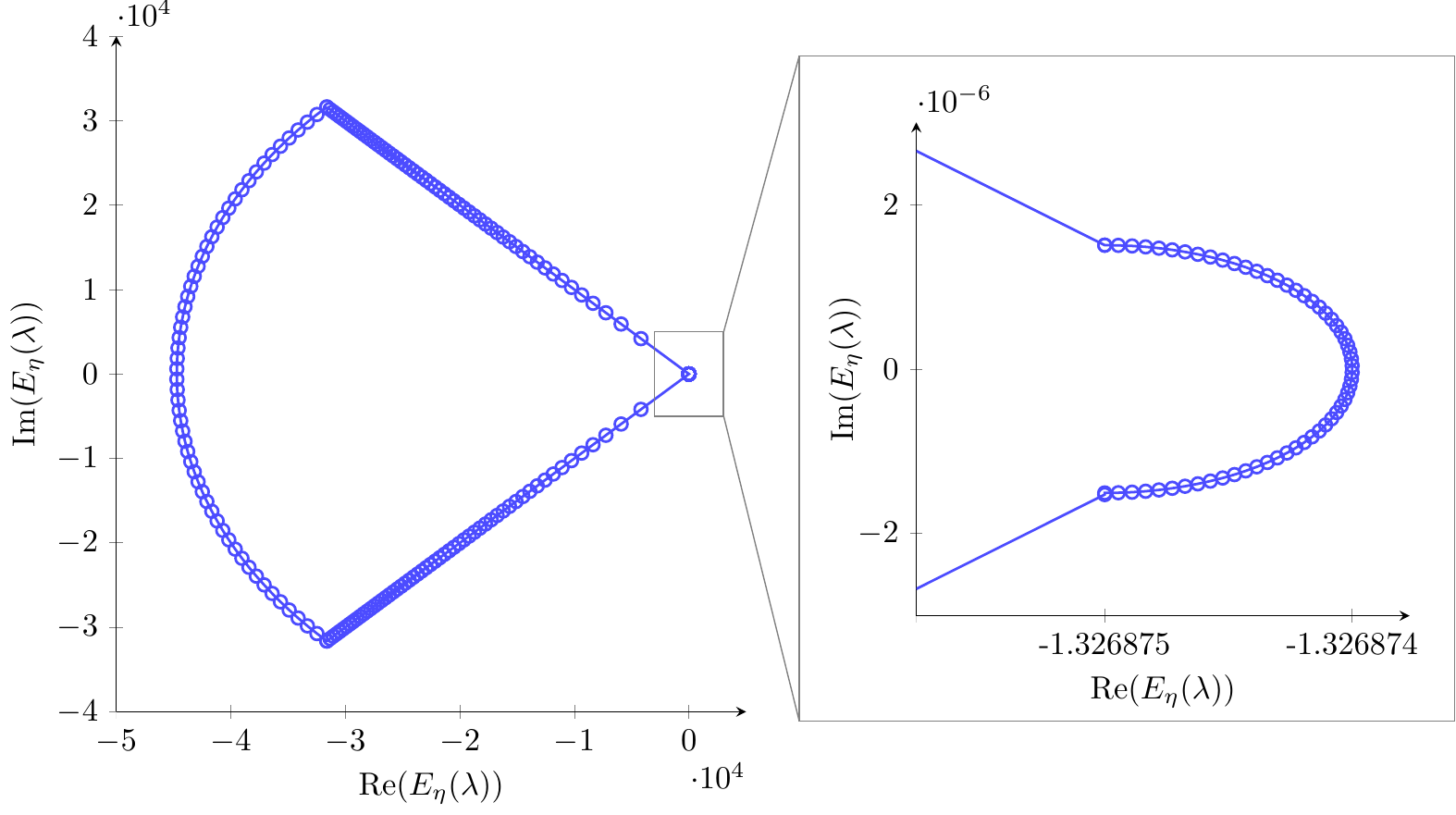}
\includegraphics[scale=0.93]{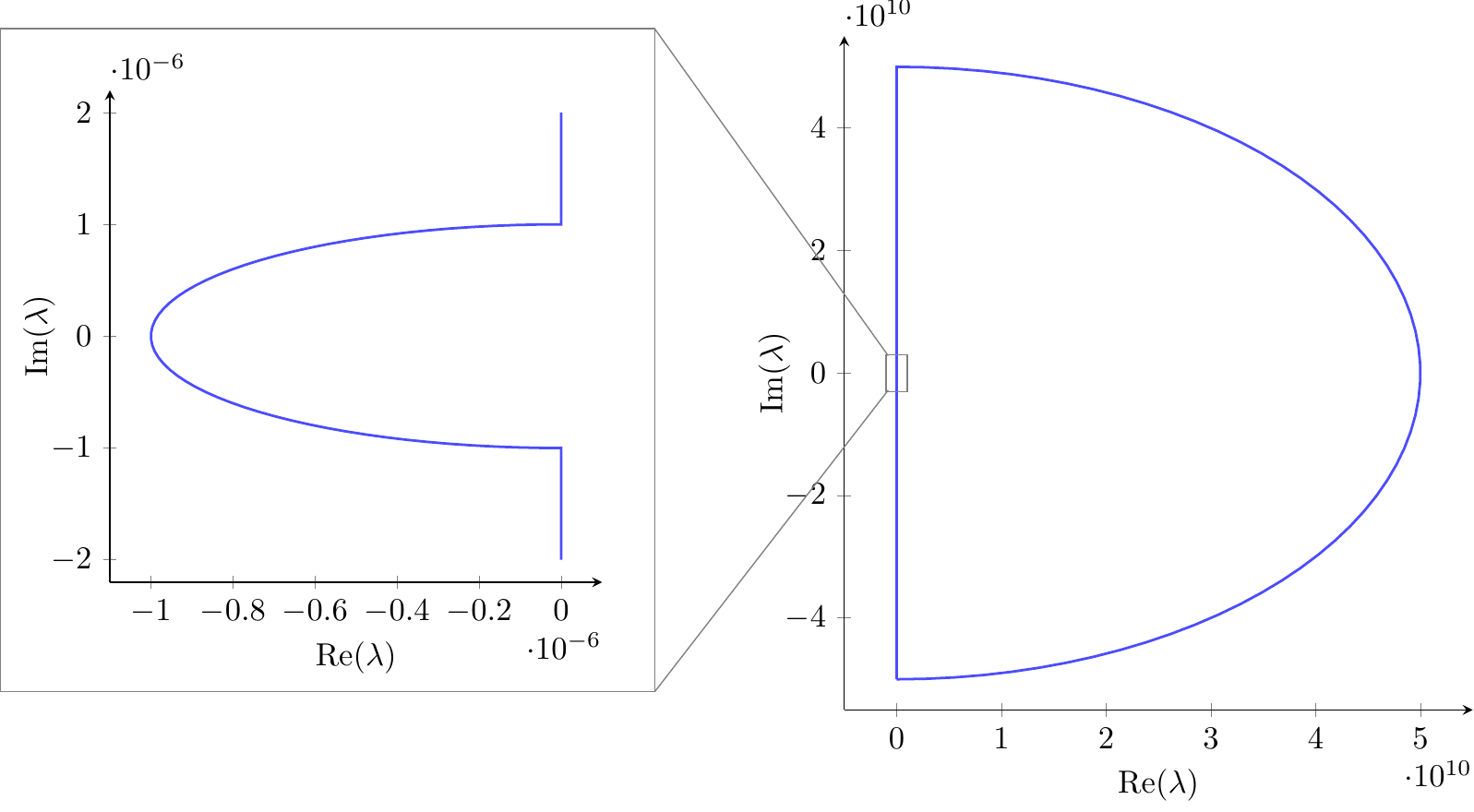}
\caption{Top: A plot of the function $E_\eta(\lambda)$, defined in \cref{eq:defeeta}, for the contour in the spectral parameter (bottom). The wave speed $c = 2.4$ and in these figures $\epsi = 1$. As can be clearly seen, $E_\eta(\lambda)$ does not wind around the origin, even though the curve of spectrum does. This confirms that $\lambda =0$ is not an eigenvalue in the sense of \Cref{def:eigess} and also numerically confirms the results in \Cref{sec:appendix}. Moreover, the spectral radius is quite large in this case ($5\times10^{10}$), while the function $E_\eta(\lambda)$ remains relatively well-behaved, even through fairly large changes of scale in the spectral parameter.}
\label{fig:fisherevanseta}
\end{figure}

As can be seen in Figure \ref{fig:fisherevanseta}, there are no eigenvalues in $\Omega_1$ in the right half plane with $|\lambda| \leq 5 \times 10^{10}$. It is evident that the winding number of $E_\eta(\lambda)$, and hence $D(\lambda)$, is zero. 

\subsection{Extending $E_\eta(\lambda)$  into the continuous spectrum}\label{sec:fkppextends} Since the goal of Evans function computations is to numerically infer stability or otherwise, we need to concern ourselves with values of the spectral parameter in the right half plane (that is, with $\re{\lambda} \geq 0$), not just those in $\Omega_1$. To this end, we need to consider values of $\lambda$ inside $\sigma_{\textrm{c}}$, the continuous spectrum, and re-visit our definition of (temporal) eigenvalues. We proceed in the manner outlined in \cite{jones84} and \cite{pegowein92}. 

Using Definition \ref{def:eigenvalue} for all $\lambda$ with $\re{\lambda} \geq 0$, if $\lambda \in \sigma_{\textrm{c}}$, then the matrix $A_-(\lambda)$ has two (spatial) eigenvalues, both with negative real parts. This implies that {\em every} solution of \cref{eq:system} decays to zero as $z \to +\infty$. In particular, any solution which decays to $0$ as $z \to -\infty$ will decay to zero as $z \to +\infty$, so, if we were to just require the existence of a solution decaying as $z \to \pm \infty$, we would see that {\em every} $\lambda \in \sige$ would be an eigenvalue. Moreover, it is straightforward to see that these solutions are indeed in $\cH^1(\R) \times L^2(\R)$.  

This would seem to suggest that {\em every} travelling wave is {\color{black} spectrally} unstable, and the linearised operator, linearised about every wave has eigenvalues with positive real part. This is at odds with with numerical experiments as well as known stability results: for example it has been known since its inception that the F-KPP wave of speed $c = 2 \sqrt{\delta}$ is stable relative to many compactly supported perturbations \cite{kpp37}, and moreover a wide variety of initial profiles will evolve in time to this wave (or at least, a closely related one) \cite{uchiyama1978}.  So in some sense we would like to say this wave is `stable' but we would also like to reconcile this notion with the idea that the linearised operator about a stable travelling wave should not have eigenvalues in the right half plane.
 {\color{black} We are thus motivated to make the following amendment to Definition \ref{def:eigenvalue}} 

\begin{definition}\label{def:eigess}
For a $\lambda \in \C$ with $\re{\lambda} \geq0$, we say that $\lambda$ is an {\em `eigenvalue'} if there is a solution to the Riccati equation that decays to $\xi_-^u$ as $z \to - \infty$ and to $\xi_+^s$ as $z \to + \infty$. 
\end{definition}

{\color{black}
\begin{remark} \label{rem:weighted}
We remark that the apparent contradiction which led to \Cref{def:eigess} can be resolved by the introduction of so-called {\em weighted spaces}. This amounts to restricting perturbations to those which decay faster than a given prescribed rate $\nu$, (in this example the space is denoted $\cH^1_\nu$). Subsequently the spectrum is shifted, and one chooses $\nu$ (if possible) so that the spectrum is shifted into the negative half plane. Thus there are no eigenvalues in the continuous spectrum with eigenfunctions in $\cH_\nu^1$. The wave is then said to be stable relative to these weighted perturbations (provided of course that there are no other eigenvalues with positive real part and with eigenfunctions in this weighted space).  

We claim that in the F-KPP travelling wave case, the presence of an `eigenvalue' corresponds {\em exactly} to weighted instability for all weight functions which shift the continuous spectrum into the left half plane. The right edge of the continuous spectrum is moved to the point $\lambda = 1+ c \nu+ \delta \nu^2$ in the weighted space $\cH_\nu^1$. This will be to the left of the right edge of the continuous spectrum in the unweighed space $\cH^1$ provided $\nu \in \left(\frac{-c - \sqrt{c^2-4\delta}}{2\delta}, \frac{-c+\sqrt{c^2-4\delta}}{2\delta} \right)$. Moreover, this will be in the left half plane only if $c^2<4\delta$.


Consider $\lambda \in \sige (\cL)$ (on the unweighted space $\cH^1$), with $\re{\lambda} >0$. If $\lambda$ is not an `eigenvalue', then all such solutions to \cref{eq:system} will decay exactly like $e^{-\mu^u_+ z}$ as $z \to +\infty$, and thus there can be no eigenfunctions in the weighted spaces $\cH_\nu^1$ for $\nu$ which shift the continuous spectrum to the left. 
However, if $\lambda$ is an `eigenvalue' then this indicates that there will be a solution with a decay rate faster than any weight function which will move the continuous spectrum to the left. Thus it will remain an `eigenvalue' for all weighted spaces with weights $\nu$ shifting the continuous spectrum to the left. We can therefore conclude that there is a point in the spectrum which will not be moved into the left half plane in any such weighted space. 
\end{remark}
}


\begin{remark}\label{rem:pego}
\Cref{def:eigess} follows the definition of `eigenvalue' from \cite{pegowein92}. The roots of $E_\eta(\lambda)$ and $E_\tau(\lambda)$ will detect the values $\lambda$ where we have a solution decaying with the {\em maximal} exponential rate as $z \to \pm \infty$. It is obvious that this definition agrees exactly with our definition of an eigenvalue in the region $\Omega_1$. Inside the continuous spectrum we will not allow our eigenfunction to decay to $0$ in just any fashion, it needs to decay along the (now strongly) stable subspace $\xi^s_+$. Since we will be primarily interested with the zeros of $E_{\eta,\tau}(\lambda)$ and given the discussion in \Cref{rem:weighted}, we drop the quotation marks, and simply refer to any such $\lambda$ as a (temporal) eigenvalue of the linearised operator $\cL$. 
\end{remark}

With Definition \ref{def:eigess}, eigenvalues still correspond exactly to zeros of $E_\eta(\lambda)$. Moreover, it is straightforward to see that we can still define $E_\eta(\lambda)$ to be analytic as we extend $\lambda$ into $\sige$. We can, in fact, use some analysis of the Riccati equation on the $\eta$ chart of $\C P^1$ to see when exactly we get a zero of $E_{\eta,\tau}(\lambda)$. To begin with, we fix a $\lambda \in \sige$ with $\re{\lambda} \geq 0$ and seek a heteroclinic connection between $\mu^u_-$ and $\mu^s_+$. For a general $\lambda$ we have that the unstable orbit in the $\eta$ chart coming from $\mu^u_-$ (viewed as a subspace in $\C^2$) will tend towards the steady state solution $ \mu^{sc}_+ : = \mu^u_+$ (here, because we are in $\sige$ we denote $\mu^{sc}_+ := \mu^{u}_+$ to note that it is in fact a stable fixed point of the Riccati flow on the $\eta$ chart). 

Recall that 
$$ 
\mu^{sc}_+ = \frac{-c + \sqrt{c^2 + 4\epsi(\lambda -1)}}{2 \epsi} \quad \textrm{and} \quad
\mu^{s}_+ = \frac{-c - \sqrt{c^2 + 4\epsi(\lambda -1)}}{2 \epsi}\,,
$$
which are different points in the chart of $\C P^1$ for all values of $\lambda$ except when $\displaystyle c^2+4\epsi(\lambda-1) = 0$, that is, except for $\lambda = 1 - c^2/(4\epsi)$. At this value of $\lambda$, we have  $\mu_+^s = \mu_-^{sc}$ and so what was a heteroclinic connection (of the Riccati flow on the $\eta$ chart of $\C P^1$) between the fixed points $\mu_-^u$ and $\mu_+^{sc}$, is also a heteroclinic connection between the fixed points $\mu_-^u$ and $\mu_+^{s}$.  Consequently, we have a zero of $E_\eta(\lambda)$ and this value of $\lambda$ will be an eigenvalue according to Definition \ref{def:eigess}. 

We observe that if $c^2 \geq 4 \epsi$ then the largest root, say $\tilde{\lambda}$, of the function $E_\eta(\lambda)$ is real and negative. However, as $c \searrow 2 \sqrt{\epsi}$, we have that $\tilde{\lambda}$ tends towards $0$ and if $c<2\sqrt{\epsi}$, then $E_\eta(\lambda)$ has a real, positive root. Thus, we have an eigenvalue $\tilde{\lambda}$ in the right half plane, which (evidently) destabilises the travelling wave. This corresponds with numerical experiments as well as the analytic results proven in \cite{hagan81}.
\begin{figure}
\includegraphics{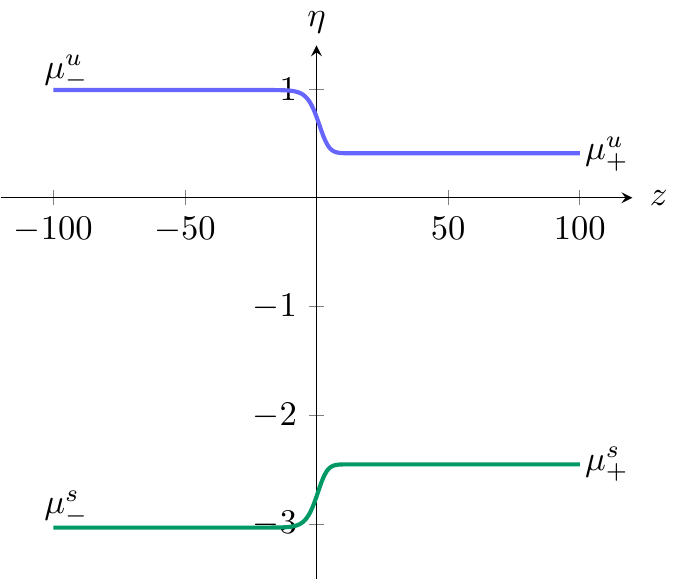} \quad
\includegraphics{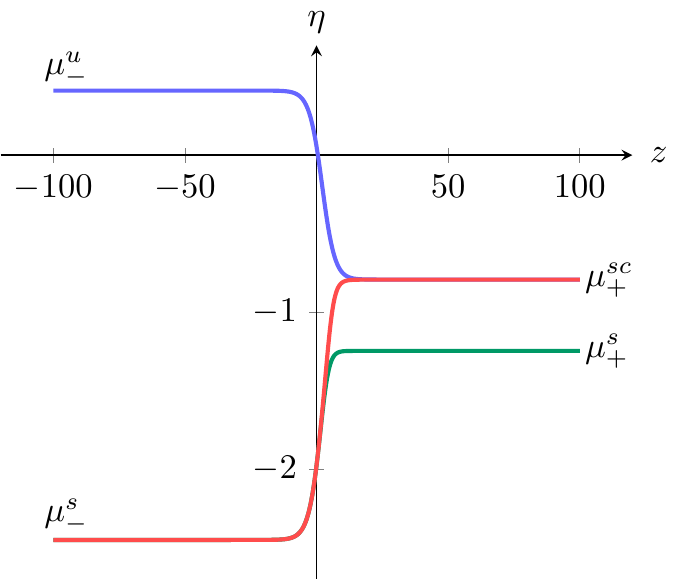}
\caption{Plots of the relevant solutions to the Riccati equation, \cref{eq:riccati}, in the $\eta$ chart for two (real) values of $\lambda$. On the left we have $ \lambda = 2$ and it is obvious that a heteroclinic connection does not exist. On the right, $\lambda = 0.01$ and we can see that the solution tending to $\mu_-^u$ as $z \to -\infty$ (the upper solution, blue online) tends towards $\mu_+^{sc} $ as $z \to + \infty$ (the middle solution, red online). Thus, while we have a traditional eigenvalue, we do not have an `eigenvalue' according to Definition \ref{def:eigess}.}
\label{fig:etachart}
\end{figure}

\subsection{Switching Charts} \label{subsec:charts}

Suppose that for some fixed $z_0$ we had that the solution of our Riccati equation $|\eta^u(z;\lambda)| \to \infty$, implying that the corresponding solution in the $\tau$ chart must tend to $0$. Given the uniqueness of solutions to ODEs on manifolds, we can find a value $z_1 < z_0$ such that $|\eta^u(z_1;\lambda)| < \infty$ and so consider the corresponding initial value problem in the $\tau$ chart where $\tau^*(z_1;\lambda) = \dfrac{1}{\eta^u(z_1;\lambda)}$. Evolving the $\tau$ problem from $z_1$ to a new $z_2 > z_0$ (noting along the way that $\tau^*(z_0;\lambda) = 0$), we can then consider the solution of the Cauchy problem on the $\eta$ chart with initial condition $\eta^u(z_2;\lambda) = \dfrac{1}{\tau^*(z_2;\lambda)}$. In this way, we have moved beyond the singularity of our Riccati solution. The impact that this strategy has upon our previously defined Evans functions needs to be explored. Given that we are no longer in the case where the number of poles of $E_\eta(\lambda)$ (the value $P$ above) is $0$, our winding number calculation becomes 
\begin{equation} \label{eq:mainthm}
\oint_{\gamma(t)} \frac{\dot{E_{\eta}}(\lambda)}{E_{\eta}(\lambda)} d \lambda = \oint_{\gamma(t)}\frac{\dot{D}(\lambda)}{D(\lambda)} d \lambda  - \oint_{\gamma(t)} \frac{\dot{w}^s_1(0;\lambda)}{w^s_1(0;\lambda)} d \lambda  - \oint_{\gamma(t)} \frac{\dot{w}^u_1(0;\lambda)}{w^u_1(0;\lambda)} d\lambda\,.
\end{equation}

We elaborate on the meaning of this result in the following theorem:
\begin{theorem}\label{th:cool} Let $\gamma(t)$ be a parametrised curve in the complex plane such that $D(\lambda)$ is analytic and has no zeros on $\gamma(t)$. Then, the winding number of $E_\eta$ along $\gamma(t)$ is the number of eigenvalues of $\cL$ inside that curve minus the number of poles of $E_\eta(\lambda)$ inside $\gamma(t)$.  
\end{theorem}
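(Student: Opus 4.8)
The plan is to deduce the theorem directly from the winding-number identity \eqref{eq:mainthm}: divide it by $2\pi i$ and read off each of the three terms using the argument principle quoted in \Cref{sec:riccati}.

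First I would record that \eqref{eq:mainthm} is merely the integrated form of the logarithmic-derivative identity for a quotient: from the definition \eqref{eq:defeeta}, $E_\eta(\lambda)=D(\lambda)/\bigl(w^u_1(0;\lambda)\,w^s_1(0;\lambda)\bigr)$, hence
\[
\frac{\dot E_\eta(\lambda)}{E_\eta(\lambda)}
=\frac{\dot D(\lambda)}{D(\lambda)}
-\frac{\dot w^u_1(0;\lambda)}{w^u_1(0;\lambda)}
-\frac{\dot w^s_1(0;\lambda)}{w^s_1(0;\lambda)}
\]
wherever the three functions are non-vanishing, and integrating along $\gamma(t)$ --- which, besides the stated hypothesis on $D$, we take to avoid the zeros of $w^{u,s}_1(0;\cdot)$ as well, so that $E_\eta$ has neither zeros nor poles on $\gamma(t)$ --- yields \eqref{eq:mainthm}.

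Next I would divide by $2\pi i$ and interpret the three terms. The decaying solutions $w^{u,s}(z;\lambda)$ depend analytically on $\lambda$ on a simply connected neighbourhood of the region enclosed by $\gamma(t)$ --- including values in $\sige$, after the analytic continuation of \Cref{sec:fkppextends} --- so $D$, $w^u_1(0;\cdot)$ and $w^s_1(0;\cdot)$ are analytic there and $E_\eta$ is meromorphic. By the argument principle, $\tfrac{1}{2\pi i}\oint_\gamma \dot E_\eta/E_\eta$ equals the number of zeros of $E_\eta$ minus the number of poles of $E_\eta$ inside $\gamma(t)$ (with multiplicity), which is precisely the winding number of $E_\eta$ about the origin along $\gamma(t)$. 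The term $\tfrac{1}{2\pi i}\oint_\gamma \dot D/D$ is the number of zeros of $D$ inside $\gamma(t)$ ($D$ being analytic, there are no poles), and by \Cref{prop:defevans} together with \Cref{def:eigess} and \Cref{rem:pego} these zeros are exactly the eigenvalues of $\cL$ inside $\gamma(t)$, counted with multiplicity. Finally, since $D$ is analytic every pole of $E_\eta$ originates from a zero of the denominator $w^u_1(0;\cdot)\,w^s_1(0;\cdot)$, so $\tfrac{1}{2\pi i}\oint_\gamma \dot w^u_1/w^u_1+\tfrac{1}{2\pi i}\oint_\gamma \dot w^s_1/w^s_1$ counts the poles of $E_\eta$ inside $\gamma(t)$. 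Combining the three gives the assertion.

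The step I expect to be the real obstacle is the multiplicity bookkeeping in that last identification, which is clean only when the zero set of $D$ and the zero set of $w^u_1(0;\cdot)\,w^s_1(0;\cdot)$ are disjoint inside $\gamma(t)$. In principle an eigenvalue $\lambda_0$ could coincide with a chart-switch point, i.e. $D(\lambda_0)=0$ and $w^u_1(0;\lambda_0)=0$ simultaneously --- and then $w^s_1(0;\lambda_0)=0$ as well, since at a zero of $D$ the solutions $w^u$ and $w^s$ are proportional --- in which case $\mathrm{ord}_{\lambda_0}E_\eta=\mathrm{ord}_{\lambda_0}D-\mathrm{ord}_{\lambda_0}w^u_1-\mathrm{ord}_{\lambda_0}w^s_1$ can be negative, so $\lambda_0$ would register as a pole of $E_\eta$ even though it is an eigenvalue, and the naive count would fail. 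One therefore needs either to exclude such coincidences --- generic, and in fact vacuous in both of the examples here, where $w^{u,s}_1(0;\lambda)\neq 0$ throughout the region of interest (so $E_\eta$ is analytic and no pole term arises) --- or to track the signed multiplicities carefully so that the poles absorbed at such a $\lambda_0$ are compensated, restoring the identity $\text{winding number}=\#\{\text{eigenvalues inside }\gamma(t)\}-\#\{\text{poles of }E_\eta\text{ inside }\gamma(t)\}$. Everything else is the argument principle applied three times.
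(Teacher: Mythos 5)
Your proposal follows essentially the same route as the paper: rearrange $D(\lambda)=w^u_1(0;\lambda)\,w^s_1(0;\lambda)\,E_\eta(\lambda)$, take logarithmic derivatives to obtain \cref{eq:mainthm}, and apply the argument principle to identify zeros of $D$ with eigenvalues and zeros of $w^{u,s}_1(0;\cdot)$ with poles of $E_\eta$. Your closing remark about the degenerate case where an eigenvalue coincides with a zero of the denominator is a legitimate refinement that the paper's (very terse) proof does not address, but it does not change the argument.
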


\begin{proof} Rearranging the definition of $E_\eta(\lambda)$ we have
$$ D(\lambda) = w^s_1(0;\lambda) w^u_1(0;\lambda) E_\eta(\lambda)\,.$$
Choosing a curve $\gamma(t)$ such that $D(\lambda)$ has no zeros on $\gamma(t)$ (that is, avoiding any eigenvalues) and such that $D(\lambda)$ is analytic on $\gamma(t)$, then applying the chain rule to logarithmic differentiation and rearranging, we have \cref{eq:mainthm}. $E_\eta$ has a pole exactly when either $w_1^u(0;\lambda)$ or $w_1^s(0;\lambda)$ is zero, and eigenvalues of $\cL$ are the zeros of $D(\lambda)$. 
\end{proof}

In the case of the F-KPP equation, this theorem enables us to find the number of eigenvalues inside any bounded contour, except those containing the so-called `absolute spectrum' where the function $E_\eta(\lambda)$ has a branching point of its domain. Following \cite{kapitula2013spectral}, we define the {\em absolute spectrum} as the $\lambda \in \C$ such that the real parts of $\mu_+^{u,s}$ or $\mu_-^{u,s}$ coincide. These can be determined as $\lambda \leq 1 - {c^2}/({4\epsi})$ in the case of $\mu_+$, and $\lambda \leq -1 - {c^2}/({4\epsi})$ for $\mu_-$ (note that $\lambda \in \R$ in the absolute spectrum). This offers another mechanism for destabilisation of the waves as $c \searrow 2 \sqrt{\epsi}$, namely that the absolute spectrum moves into the right half plane. In this case, the loss of meromorphicity of $E_\eta(\lambda)$ coincides exactly with the leading edge of the absolute spectrum. 

\subsection{A proof of the absence of eigenvalues with positive real part.}\label{sec:appendix}

The proof proceeds in two parts. For the first part, we show that there are no eigenvalues with non-zero imaginary part. For the second, we show the absence of a real positive eigenvalue when $c\geq2 \sqrt{\delta}$. 

\subsubsection{No complex eigenvalues}

Recalling \cref{eq:riccati}, we have that on the $\eta$ chart of $\C P^1$, the linearisation is given by 
\begin{equation}\label{eq:etariccati}
\eta'  = \frac{1}{\epsi}(\lambda + 1 - 2 \hat{u}) - \frac{c}{\epsi}\eta - \eta^2\,.
\end{equation}
Writing $\eta = \alpha + i \beta$ and $\lambda = m + i n$ with $m \neq 0$, we have that \cref{eq:etariccati} becomes (when viewed on $\C \approx \R^2$)
\begin{equation}
\begin{split}
\alpha' & = \frac{1}{\epsi}\left(1 - \alpha c + m - 2 \hat{u} \right) - \alpha^2 + \beta^2\,, \\
\beta'  & = - 2 \alpha \beta - \frac{\beta c}{\epsi} + \frac{n}{\epsi}\,.
\end{split}
\end{equation}
We see that on the line $\beta = 0$, we have that $\beta' = \dfrac{n}{\epsi}$ and so the sign of $\beta'$  
is the same as the sign of the imaginary part of the eigenvalue parameter $\lambda$.  Consequently, the flow is pointing towards the upper half plane when $\imag{\lambda} >0$ and towards the lower half plane when $\imag{\lambda}<0$. Now we have that an eigenvalue $\lambda$ on this chart is a value of $\lambda$ such that there is a connection under this flow from $\mu_-^u$ to $\mu_+^s$. 

Given the previous statement about the direction of the flow on the real axis of this chart, we claim that $\imag{\mu_-^u}>0$  and $\imag{\mu_+^s}<0$ if $\imag{\lambda} >0$, and the reverse inequalities if $\imag{\lambda}<0$. Thus, a connection is impossible, as long as $\imag{\lambda} \neq 0$. Proceeding directly we have that 
$$\mu_-^u = \frac{-c + \sqrt{c^2+4\epsi(\lambda +1)}}{2 \epsi}$$
and that 
$$ \imag{\mu_-^u} = \frac{\varkappa}{{\color{black}2}\epsi} \sin \left( \frac{1}{2}  \arg \left( c^2 + 4 \epsi(1+\lambda) \right) \right)\,,$$
where {\color{black} $\varkappa  := \sqrt{|c^2 + 4 \delta(\lambda+1)|} >0$} and by $\arg$, we mean the {\color{black} principal} argument of the complex number. If $\imag{\lambda}>0$, we have that $0 <  \arg \left( c^2 + 4 \epsi(1+\lambda) \right) \leq \pi$ and so $0 < \sin \left( \frac{1}{2}  \arg \left( c^2 + 4 \epsi(1+\lambda) \right) \right) \leq 1$. In other words, if $\imag{\lambda} >0$, then so is $\imag{\mu_-^u}$. The same calculation shows that if $\imag{\lambda}<0$, then $\imag{\mu_-^u}<0$ as well. Similarly, the imaginary part of $\mu_+^s$ has the opposite sign to that of $\imag{\lambda}$. Thus, we have shown that there are no connections possible on this $\eta$ chart. 

Essentially the same calculation shows that there is no connection between $(\mu_-^u)^{-1}$ and $(\mu_+^s)^{-1}$ on the $\tau$ chart and {\color{black} it is worth noting explicitly that the above calculation is independent of the real part of the spectral parameter, and} so we conclude that there are no eigenvalues with non-zero imaginary part {\color{black} (i.e. any eigenvalues must be real)}. Note that this calculation is independent of the continuous spectrum and so in order to conclude stability we will need to take the continuous spectrum into account. 

\subsubsection{Real eigenvalues}\label{subsec:realevals} 

To show that there are no real, positive eigenvalues when $c \geq 2\sqrt{\epsi}$ we proceed as in \cite{rmckrtj12}, although here we avoid the formal machinery discussed therein. Recalling \cref{eq:riccati}, if $\lambda$ is real and positive and if $c \geq 2 \sqrt{\epsi}$, we are looking for a heteroclinic connection on $\R P^1 \approx S^1$, the unit circle. The key idea is to evaluate the Riccati equation on the unit circle at the point $\mu_+^s$. We have the following: 
\begin{equation}\label{eq:key}
\eta' \large|_{\eta = \mu_+^s} =  \frac{1}{\epsi}\left(\lambda + 1 -  2 \hat{u} \right) - \frac{c}{\epsi} \mu_+^s - (\mu_+^s)^2 = \frac{1}{\epsi}(2 - 2 \hat{u})\,, 
\end{equation}
noting here that this is independent of $\lambda$ and strictly positive if $c\geq 2 \sqrt{\epsi}$ (actually for all $c>0$ but if $c\leq 2 \sqrt{\epsi}$, then $\mu_+^s$ is no longer real for all real non-negative values of $\lambda$). 

Next, for each $\lambda \geq 0$, denote the solution on $\R P^1 \approx S^1$ decaying to $\mu_-^u$ by $\ell(z)$. We observe that if $\lambda$ is {\em not} an eigenvalue we have that $\displaystyle \lim_{z \to + \infty} \ell(z) = \mu_+^u(\lambda)$ (or $\mu_+^{sc}(\lambda)$ if $\lambda \leq 1$ to remain consistent with earlier notation). The implication of \cref{eq:key} is the following: 
\begin{proposition}\label{prop:real}
Suppose that $\ell(z)$ crosses $\mu_+^s$ $N_j$ times for some fixed values $\lambda_j$, $j = \left\{ 1, 2 \right\}$. Then, the number of eigenvalues in the interval $(\lambda_1, \lambda_2)$ is equal to $|N_1 - N_2|$. 
\end{proposition}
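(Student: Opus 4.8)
The plan is to realise $\ell(z)$ as one member of a continuous family of trajectories on $\R P^1 \cong S^1$ parametrised by $\lambda$, to use \cref{eq:key} to pin down the orientation with which these trajectories cross the marked point $\mu_+^s$, and then to run a homotopy argument: the crossing number $N(\lambda)$ is piecewise constant in $\lambda$ and jumps by exactly one each time $\lambda$ passes an eigenvalue of $\cL$.

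First I would set up the family carefully. For every $\lambda$ in the interval under consideration the hypothesis $c \geq 2\sqrt{\epsi}$ guarantees that $\mu_+^s(\lambda)$ is real and hence a genuine point of $\R P^1$; write $\ell^\lambda(z)$ for the solution of the Riccati equation \cref{eq:riccati} on $\R P^1$ with $\ell^\lambda(z) \to \mu_-^u(\lambda)$ as $z \to -\infty$. Its existence, smoothness in $(z,\lambda)$, the existence of $\lim_{z \to +\infty}\ell^\lambda(z)$, and continuity of that limit in $\lambda$ all follow from the hyperbolicity of $A_\pm(\lambda)$ together with standard invariant-manifold / exponential-dichotomy theory, exactly as in the references cited for \Cref{prop:geometric}. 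By \Cref{def:eigess}, $\lambda$ is an eigenvalue precisely when $\ell^\lambda(z) \to \mu_+^s(\lambda)$ as $z \to +\infty$; otherwise $\ell^\lambda(z) \to \mu_+^u(\lambda) = \mu_+^{sc}(\lambda)$, and for $c\geq 2\sqrt{\epsi}$ and $\lambda \geq 0$ one checks directly that $\mu_+^s(\lambda) \neq \mu_+^u(\lambda)$ and $\mu_+^s(\lambda) \neq \mu_-^u(\lambda)$ as points of $S^1$.

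Next I would extract the orientation statement from \cref{eq:key}: the quantity $\eta'\big|_{\eta = \mu_+^s(\lambda)}$ is strictly positive and independent of $\lambda$, so whenever any solution of \cref{eq:riccati} --- in particular $\ell^\lambda$ --- passes through the point $\mu_+^s(\lambda) \in S^1$ it does so transversally and in the direction of increasing $\eta$. Hence $Z(\lambda) := \{\, z \in \R : \ell^\lambda(z) = \mu_+^s(\lambda)\,\}$ is discrete and $N(\lambda) := \#Z(\lambda)$ is a genuine (consistently signed) count. As $\lambda$ varies the elements of $Z(\lambda)$ move continuously, so $N(\lambda)$ can only change when a crossing point enters from or escapes to $z = \pm\infty$. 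Near $z = -\infty$ we have $\ell^\lambda \approx \mu_-^u(\lambda) \neq \mu_+^s(\lambda)$, so nothing happens there; near $z = +\infty$ we have $\ell^\lambda \to \mu_+^u(\lambda) \neq \mu_+^s(\lambda)$ unless $\lambda$ is an eigenvalue. Therefore $N(\lambda)$ is constant on every subinterval of $(\lambda_1,\lambda_2)$ free of eigenvalues of $\cL$.

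The remaining --- and hardest --- step is to show that $N$ jumps by exactly $1$, always in the same direction, as $\lambda$ crosses an eigenvalue $\lambda^*$. For $\lambda$ close to $\lambda^*$ and $Z$ large, continuous dependence on the compact interval $[-Z,Z]$ gives $\ell^\lambda(Z) \approx \ell^{\lambda^*}(Z) \approx \mu_+^s(\lambda^*)$; since $\mu_+^s$ is the repelling fixed point of the (nearly autonomous) asymptotic flow for $z > Z$, the trajectory $\ell^\lambda$ is then driven monotonically from a neighbourhood of $\mu_+^s$ out to $\mu_+^u$, acquiring one extra crossing of $\mu_+^s$ for $\lambda$ on exactly one side of $\lambda^*$ and none on the other. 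That it is one side rather than the other follows from the transversality of the zero $\lambda^*$ of the Evans function $D(\lambda)$ (a simple eigenvalue; a root of multiplicity $k$ is handled by applying the same analysis to a small circle about $\lambda^*$ and contributes $k$), and that the direction of the jump is the same at every eigenvalue follows because the crossing orientation is fixed by \cref{eq:key} while $\mu_+^s(\lambda)$ and $\mu_+^u(\lambda)$ depend monotonically on $\lambda$. Granting this, for $\lambda_1,\lambda_2$ not eigenvalues, $N(\lambda_2) - N(\lambda_1)$ equals, up to an overall sign, the number of eigenvalues of $\cL$ in $(\lambda_1,\lambda_2)$ counted with multiplicity, which is the assertion $|N_1 - N_2|$. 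I expect this jump analysis to be the crux: it is the elementary stand-in for the spectral-flow / Maslov-index bookkeeping of \cite{rmckrtj12}, whereas everything else reduces to continuous dependence of ODE solutions on parameters and the one-line computation \cref{eq:key}.
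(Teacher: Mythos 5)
Your proof is correct and follows essentially the same route as the paper's: the one-directional transversal crossing forced by \cref{eq:key}, combined with continuity of $\ell^\lambda$ and of its limit as $z\to+\infty$ away from eigenvalues, so that the crossing count can only change, by one each time, when that limit becomes $\mu_+^s$. Your write-up is considerably more detailed than the paper's (which asserts the jump-by-one step without the local analysis near $z=+\infty$ or any discussion of multiplicity), but the underlying argument is the same.
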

\begin{proof} Suppose, without loss of generality, that $N_1 = N_2+1$. \Cref{eq:key} means that $\ell(z)$ can only cross $\mu_+^s$ in one direction. This, combined with the previous observations about the limit of $\ell(z)$ for $\lambda$ not an eigenvalue, means that there must be a $\lambda \in (\lambda_1, \lambda_2)$ where $\displaystyle \lim_{z \to +\infty} \ell(z) = \mu_+^s$. Notably, this is the definition of an eigenvalue (Definition \ref{def:eigess}).  Further, the fact that $ \ell(z)$ can cross $\mu_+^s$ in only one direction means that for each eigenvalue $\lambda \in (\lambda_1, \lambda_2)$, the difference $|N_1 - N_2|$ must increase by one. 
\end{proof}

Given Proposition \ref{prop:real}, it suffices to show that there are no crossings for $\lambda$ on the positive real line of $\ell(z)$ as $z$ ranges over $\R$ for $c \geq 2 \sqrt{\epsi}$. If $\lambda = 0$, we have that \cref{eq:system} is the equation of variations along $\hat{u}(z)$ in the phase plane. Thus, the solution $\ell(z)$ is simply the (unit) tangent vector to the curve $(\hat{u}(z), \hat{u}'(z))$ in the phase plane. As $z$ ranges over $\R$ it is obvious that the tangent vector to the curve is never parallel to the eigenvector at positive infinity $(1, \mu^s_+)$.  
Next, for $\lambda \gg 1$ we observe that \cref{eq:system} is hyperbolic, so $\ell(z) \sim \mu_-^u$, the steady state solution. Thus, there are no crossings for $\lambda \gg 1$. This completes the proof that there are no eigenvalues on the positive real line and the proof of spectral stability of the positive travelling waves in the F-KPP equation.   

\begin{remark}
\label{rem:dirty}
To the best of our knowledge this is a new  proof of the absence of eigenvalues {\color{black} with positive real part, and nonzero imaginary part} of the linearised operator about travelling waves in the F-KPP equation of speed $c \geq 2 \sqrt{\delta}$.

\end{remark}

\section{Travelling Waves in a Keller-Segel Model} \label{sec:keller-segel}

We now turn our attention to the application of the techniques from \Cref{sec:fkpp} to a system of PDEs with one spatial and one temporal independent variable, and more than one dependent variable. We focus on the parameter regime of \cref{eq:ks-pde-full} wherein explicit solutions can be found to the travelling wave equation \cref{eq:ks-travellingode}, and so for the remainder of this section, we set $\ve =0$  for unless otherwise specified.

Setting $\ve = 0$, \cref{eq:ks-travellingpde} becomes
\begin{equation} \label{eq:keller-segel-pde-tw}
\begin{split}
u_{\tau} & = cu_z - \alpha w\,, \\
w_{\tau} &= \delta w_{zz} - \beta \left( \frac{w u_z}{u}  \right)_z  + cw_z\,.
\end{split}
\end{equation}

As before, a travelling wave solution will be a stationary solution $(\bar{u}(z),\bar{w}(z))$ to \cref{eq:keller-segel-pde-tw}. In \cite{FC00}, an explicit solution is given:
\begin{equation} \label{eq:keller-segel-sols}
\begin{split}
\bar{u}(z) & = \left( u_r^{-1/\gamma} + \sigma e^{-c(z+z^*)/\delta} \right)^{-\gamma}\,, \\
\bar{w}(z) & = e^{-c(z+z^*)/\delta}[\bar{u}(z)]^{\frac{\beta}{\delta}}\,,
\end{split}
\end{equation}
with 
$$ \gamma = \frac{\delta}{\beta-\delta} >0, \quad \sigma = \frac{\alpha(\beta-\delta)}{c^2} >0,$$
and $z^*$ an integration constant coming from the translational invariance of the travelling wave solutions (owing to the fact that \cref{eq:keller-segel-pde-tw} is autonomous). Without loss of generality we set $z^* =0$. We remark that $u_r$ is the asymptotic limit of the chemical attractant $u$ as $x \to \infty$ and, without loss of generality, as in \cite{HvHP14}, we set $u_r = 1$. See \Cref{fig:ks-sols} for a plot of the solutions $\bar{u}(z)$ and $\bar{w}(z)$ with explicit parameter values. 
 
\begin{figure} 
\includegraphics{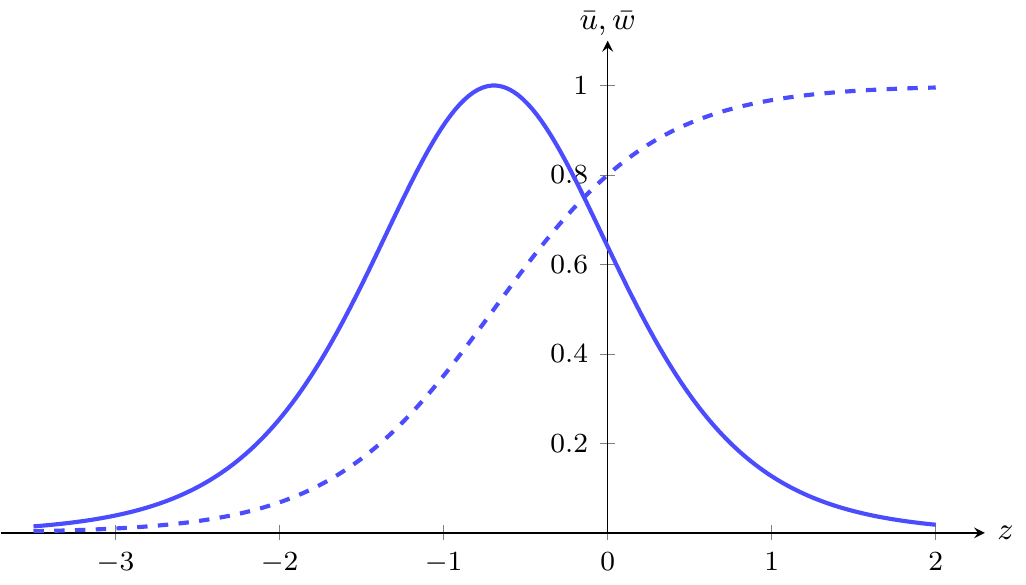}\caption{A plot of the explicit solutions given in \cref{eq:keller-segel-sols} for parameter values $\delta = 1$, $\alpha = 1$, $\beta = 2$, $c = 2$. The dashed front profile is $u(z)$, while the solid pulse is $w(z)$.} \label{fig:ks-sols}
\end{figure}
 
\subsection{{\color{black} The spectral problem}} \label{sec:ksspec} 
The steady state solutions in \cref{eq:keller-segel-sols} (using $' := \frac{d}{dz}$ as before) solve the nonlinear ODEs 
\begin{equation}\label{eq:keller-segel-twode}
\begin{split}
0 & = c u' - \alpha w, \\ 
0 & = \delta w'' + \frac{\alpha \beta}{c} \left( \frac{u' w^2}{u^2} - \frac{2ww'}{u} \right) + c w'.
\end{split}
\end{equation}
Formally, the linearisation of \cref{eq:keller-segel-pde-tw} about the steady state solution $(\bar{u}, \bar{w})$ is given as (dropping the bars for notational convenience) 
\begin{equation}
\begin{pmatrix} p \\ q \end{pmatrix}_ t =  \cL \begin{pmatrix} p \\ q \end{pmatrix}\,,
\end{equation}
where $\cL $ is defined as the following linear operator: 
$\cL :=
\begin{pmatrix}
c \partial_z & -\alpha \\ 
\cL_p & \cL_q \end{pmatrix}
$,
where
\begin{equation}
\begin{split}
\cL_p & := -\frac{\beta  w}{u} \partial_{zz} + \left( \frac{2 \beta  w u'}{u^2}-\frac{\beta  w'}{u} \right) \partial_z +  \left(\frac{\beta  w u''}{u^2}-\frac{2 \beta  w \left(u'\right)^2}{u^3}+\frac{\beta u' w'}{u^2} \right), \\
\cL_q &:= \delta \partial_{zz} + \left(c-\frac{\beta  u'}{u} \right) \partial_z + \left( \frac{\beta  \left(u'\right)^2}{u^2}-\frac{\beta  u''}{u} \right).
\end{split}
\end{equation}

We seek $\lambda \in \C$ for which $\cL - \lambda \I$ is not invertible in some appropriate Banach space. Here, $\cH^1_\nu (\R) \times \cH^1_\nu(\R)$ will suffice, for an appropriately chosen weight $\nu$. For the time being, we set $\nu =0$ and just consider $\cH^1(\R) \times \cH^1(\R)$. The operator $\cL - \lambda \I$ is equivalent to the operator $\cT(\lambda) := \frac{d}{dz} - \A(z,\lambda)$ on the space $\cH^1(\R) \times \cH^1(\R) \times L^2(\R)$ where $\A(z,\lambda)$ is given as 
$$ \A(z,\lambda) := \begin{pmatrix} \frac{\lambda}{c} & \frac{\alpha}{c} & 0 \\ 0 & 0 & 1 \\ \cA & \cB & \cC \end{pmatrix}\,, $$ 
with 
\begin{equation*}
\begin{split}
\cA &:=\left( \frac{\beta  w}{c^2 \delta  u} \right) \lambda^2 + \left( \frac{\beta  w'}{c \delta  u}-\frac{2 \beta  w u'}{c \delta  u^2} \right) \lambda + \frac{2 \beta  w \left(u'\right)^2}{\delta  u^3} -\frac{\beta  w u''}{\delta  u^2} -\frac{\beta  u' w'}{\delta  u^2}  \,, \\ 
\cB & := \left( \frac{\alpha  \beta  w}{c^2 \delta  u}+\frac{1}{\delta } \right) \lambda + \frac{\alpha  \beta  w'}{c \delta  u} -\frac{2 \alpha  \beta  w u'}{c \delta  u^2} +\frac{\beta  u''}{\delta  u}-\frac{\beta 
   \left(u'\right)^2}{\delta  u^2} \,, \\ 
\cC & := -\frac{c}{\delta }+\frac{\alpha  \beta  w}{c \delta  u}+\frac{\beta  u'}{\delta  u} \,.
\end{split}
\end{equation*}
That is, we are looking for solutions in $\cH^1 \times \cH^1 \times L^2$ to the linear, non-autonomous ODEs 
\begin{equation} \label{eq:ks-linearised}
\begin{pmatrix} p \\ q \\ r \end{pmatrix}' = \begin{pmatrix} \frac{\lambda}{c} & \frac{\alpha}{c} & 0 \\ 0 & 0 & 1 \\ \cA & \cB & \cC \end{pmatrix}  \begin{pmatrix} p \\ q \\ r \end{pmatrix}. 
\end{equation}

Observing that the solutions given in \eqref{eq:keller-segel-sols} satisfy 
$$ \lim_{z \to -\infty} (u,w,u',w') = (0,0,0,0) \quad \textrm{and} \quad \lim_{z \to \infty} (u,w,u',w') = (1,0,0,0)\,,$$ 
and that 
$$ u' = \frac{\alpha}{c} w, \quad \textrm{and} \quad \lim_{z \to -\infty} \frac{w}{u} = \frac{c^2}{\alpha(\beta-\delta)}\,,$$ 
we have that the limits as $z \to \pm \infty$ of $\cA$, $\cB$ and $\cC$ denoted $\cA_\pm$, $\cB_\pm$ and $\cC_\pm$, respectively, are
$$ \cA_+ = 0\,, \quad \cB_+ = \frac{\lambda}{\delta}\,, \quad \cC_+ = - \frac{c}{\delta} $$ 
and 
$$
\cA_-  =  \frac{\beta \lambda^2}{\alpha  \delta  (\beta -\delta )} -\frac{\beta  c^2 \lambda }{\alpha  \delta  (\beta -\delta )^2} \,, \quad
\cB_- = \left( \frac{2\beta - \delta}{\delta  (\beta -\delta )} \right) \lambda - \frac{c^2 \beta}{\delta(\beta-\delta)^2} \,, \quad
\cC_- = \frac{c (\beta +\delta )}{\delta  (\beta -\delta )} \,.
$$
We denote $\displaystyle \lim_{z\to\pm\infty}\A(z,\lambda)$ by $\A_\pm(\lambda)$.

\subsection{The continuous spectrum}\label{sec:kscont}
We have that the continuous spectrum (defined as the values of $\lambda \in \C$ for which the signature of $\A_+(\lambda)$ is not equal to the signature of $\A_-(\lambda)$) is bounded by the so-called dispersion relations: the values of $\lambda \in \C$ such that either $\A_+(\lambda)$ or $\A_-(\lambda)$ has a purely imaginary eigenvalue. The dispersion relations are
$$
\lambda = - \delta k^2 + i c k\,, \quad \text{and} \quad \lambda = i c k\,,
$$ 
where $ik,  (k \in \R$)  is the purely imaginary eigenvalue of $\A_+$,  and (implicitly):
\begin{equation} \label{eq:dispaminus}
\begin{split}
\frac{-\lambda^2}{c\delta} & + \left( -\frac{k^2}{c}+i k \left(\frac{1}{\delta} - \frac{1}{\beta -\delta}\right) \right) \lambda - \\ 
 & \frac{c k^2 (\beta +\delta )}{\delta  (\beta-\delta )} + i \left( \frac{\delta  k^3 (\beta -\delta )^2-\beta  c^2 k}{\delta  (\beta -\delta )^2} \right) = 0\,,
\end{split}
\end{equation}
where $ik,  (k \in \R$) is the purely imaginary eigenvalue of $\A_-$. 
We remark that as $\lambda$ only enters \cref{eq:dispaminus} quadratically, an exact expression can be found for it in terms of the other parameters: 
\begin{equation}\label{eq:lampm}
\lambda_\pm := \frac{-\delta(\beta-\delta)k^2 + i c(\beta-2 \delta)k \pm \sqrt{ \Delta} }{2(\beta-\delta)} \,,
\end{equation}
where the discriminant $\Delta$ is given as
\begin{equation}
\Delta:= \left( \delta ^2 (\beta -\delta )^2 \right) k^4 + \left( \beta  c^2 (4 \delta -5 \beta )\right)k^2 + i \left( 2\beta c\delta(\beta-\delta) k^3 - 4 \beta  c^3 k \right). 
\end{equation}

The entire imaginary axis is one of the dispersion relations (and hence contained in the continuous spectrum, $\sige(\cL)$), and, in general, there are points in the continuous spectrum with real part $\lambda>0$, see \Cref{fig:keller-segel-cont-spec} for an illustration. 
%
%
%

We also note that, as in the F-KPP case, the dispersion relations break up the spectral plane into distinct regions. With a slight abuse of notation, we call region to the right of the continuous spectrum $\Omega_1$. That is: 
$$\Omega_1 := \{ \lambda | \re{\lambda} > \re{\zeta} \  \forall \zeta \in \sige(\cL) \}.$$ 
There are five more regions in the complex plane where the signature of $\A_+(\lambda)$ is the same as that of $\A_-(\lambda)$. The two that are bounded we will denote by $\Omega_4$ and $\Omega_5$, and the three unbounded ones will be denoted $\Omega_2$ (containing an unbounded region of the negative real axis), $\Omega_3$ and $\Omega_6$. The continuous spectrum will be the remaining part of the complex plane: $\sige := \C \setminus \bigcup \Omega_j$. \Cref{fig:keller-segel-cont-spec} shows a plot of the dispersion relations, the regions $\Omega_j$ and the continuous spectrum for explicit choices of the parameter values $\alpha$, $\beta$, $c$ and $\delta$. 

\begin{remark}\label{rem:absspec}
We remark that it is {\em not} possible to weight the continuous spectrum completely into the left half plane. This agrees with known results \cite{nagai1991travelling} about such travelling waves, and suggests the presence of so-called absolute spectrum in the right half plane. Numerically, we were able to (for the parameter values used) determine that the absolute spectrum in the right half plane was contained in a small region $\cR := [0,0.3] \times [4i, -4i] \setminus B_{0.01}(0)$, where $B_{0.01}(0)$ is the ball of radius $r=0.01$ about the origin. See \Cref{fig:ks-absolute}. As some points in the absolute spectrum will coincide with branching points of the Evans function, we generally avoid computing the Evans function in this region. We leave the precise calculation of the absolute spectrum as well as a full spectral analysis of travelling waves in these Keller-Segel models for future work. 
\end{remark}

\begin{figure}
\includegraphics[scale=0.75]{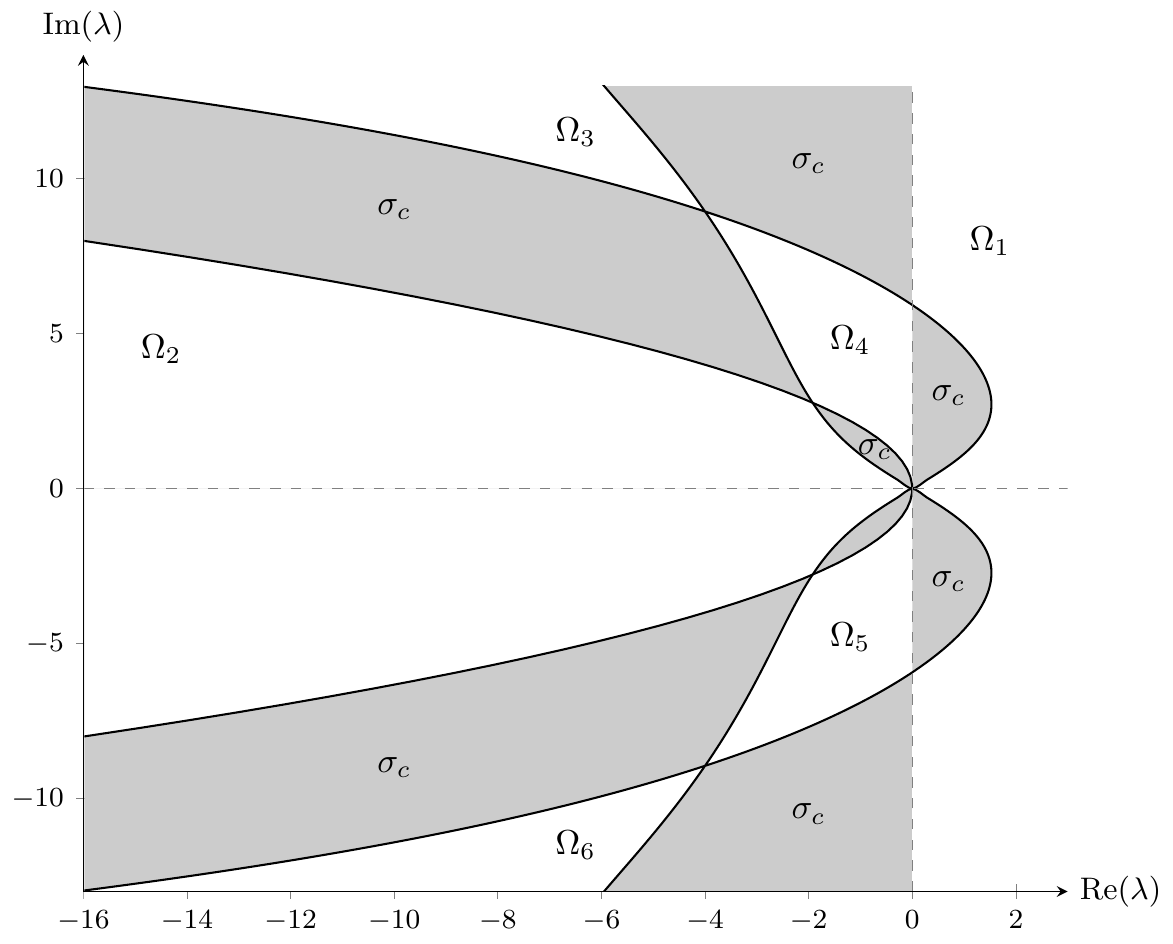}\caption{The continuous spectrum for the linearised operator $\cL$ about the waves in \cref{eq:keller-segel-sols}. The imaginary axis is included in $\sige(\cL)$. It is also evident that some points in the right half of the complex plane are in $\sige(\cL)$. The parameter values for this figure are the same as in \Cref{fig:ks-sols}.}  \label{fig:keller-segel-cont-spec}
\end{figure}

\begin{figure}
\includegraphics[trim= 0cm 10cm 0cm 10cm ,clip, scale=0.75]{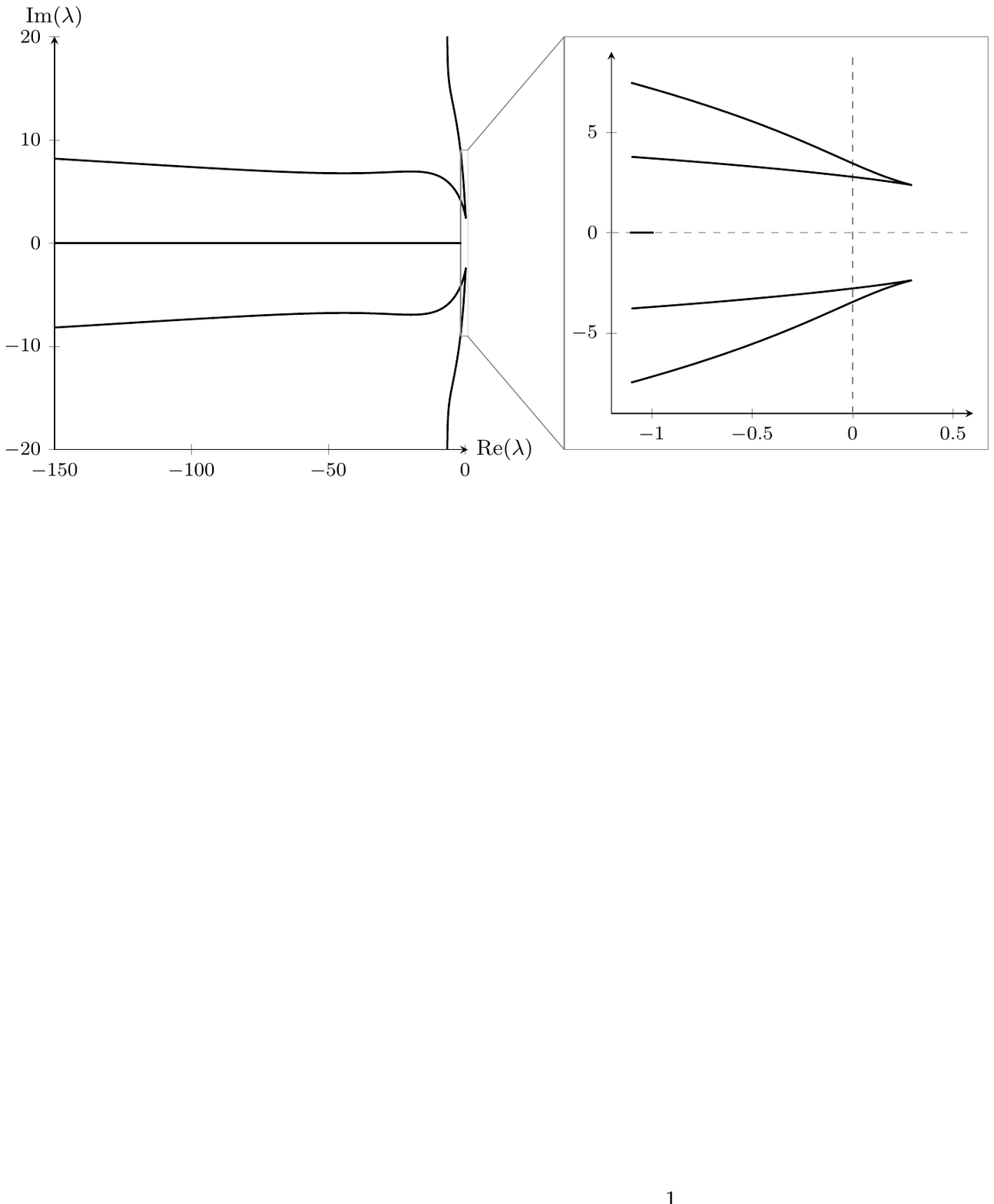}\caption{The (numerically determined) boundary of the absolute spectrum of $\cL$. It clearly contains points in the right half plane, though these points are reasonably far from the origin. The parameter values used were the same as in \Cref{fig:keller-segel-cont-spec}.}\label{fig:ks-absolute}  
\end{figure}

%
%
%
%
%

\subsection{Eigenvalues}\label{sec:kseig}
For $\lambda \in \C\setminus \sige$ we have that $\A_\pm(\lambda)$ are hyperbolic and we again denote the stable and unstable subspaces of $\A_\pm(\lambda)$ as $\xi^s_\pm(\lambda)$ and $\xi^u_\pm(\lambda)$ (or as $\xi^{s,u}_\pm$ where convenient). Just as in \Cref{prop:geometric}, we have that for $\lambda \in \C \setminus \sige$, the existence of a solution to \cref{eq:ks-linearised} decaying to 0 as $z \to \pm \infty$ puts a geometric constraint on the direction of decay. That is: 

\begin{proposition}\label{prop:ks-eigenvalue}
For $\lambda \in \C\setminus \sige$, if $( p, q, r )$ is a solution to \cref{eq:ks-linearised} such that $( p, q , r) \in \cH^1(\R) \times \cH^1(\R) \times L^2(\R)$, then 
\begin{equation}
\lim_{z \to -\infty} \begin{pmatrix} p \\ q  \\ r \end{pmatrix} \to \xi^u_- \quad \text{and} \quad
\lim_{z \to \infty} \begin{pmatrix} p \\ q \\ r  \end{pmatrix} \to \xi^s_+ \, .
\end{equation}
That is, $( p, q, r )$ decays to the stable subspace $\xi^s_+$ of $\A_+(\lambda)$ as $z \to + \infty$ and the unstable subspace $\xi_-^u$ of $\A_-(\lambda)$ as $z \to -\infty$. 
\end{proposition}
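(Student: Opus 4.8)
The plan is to mirror the structure of \Cref{prop:geometric} exactly, but in the three-dimensional phase space coming from \cref{eq:ks-linearised}. The key structural input is that, for $\lambda \in \C \setminus \sige$, the matrices $\A_\pm(\lambda)$ are hyperbolic and, crucially, that in the region $\Omega_1$ (to the right of the continuous spectrum) one can count the dimensions of their stable and unstable subspaces. First I would establish the relevant dimension count: for $\lambda$ far to the right on the real axis, the eigenvalues of $\A_+(\lambda)$ and $\A_-(\lambda)$ can be computed asymptotically (for $\A_+$ they are $\lambda/c$, and the two roots of $\delta\mu^2+\cC_+ c \mu/\ldots$, i.e.\ $\delta \mu^2 - c\mu - \lambda = 0$ up to the decoupled first block; for $\A_-$ similarly from the explicit $\cA_-,\cB_-,\cC_-$), and one checks that $\A_+(\lambda)$ has a one-dimensional stable subspace $\xi^s_+$ and a two-dimensional unstable subspace, while $\A_-(\lambda)$ has a two-dimensional unstable subspace $\xi^u_-$ and a one-dimensional stable subspace, or whatever the signatures in $\Omega_1$ dictate (this is exactly the content encoded by the signature bookkeeping of \Cref{def:essential} and the region $\Omega_1$). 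Since $\sige$ is precisely the locus where a signature changes, these dimensions are constant on the connected region $\Omega_1$, so it suffices to verify them at one convenient point.

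Next I would invoke the standard asymptotic (Levinson / Gap Lemma type) theory for linear non-autonomous systems $Y' = \A(z;\lambda)Y$ whose coefficient matrix converges exponentially to the hyperbolic limits $\A_\pm(\lambda)$ as $z \to \pm\infty$ — this is exactly the machinery cited for \Cref{prop:geometric} in \cite{agj90, jones84, pegowein92, kapitula2013spectral}, and the convergence is exponential here because $\bar u, \bar w$ in \cref{eq:keller-segel-sols} approach their limits exponentially (the coefficients $\cA,\cB,\cC$ are built from $w/u$, $u'/u$, etc., which all decay exponentially to their limiting values). The theory says that the space of solutions decaying as $z \to +\infty$ is exactly the stable manifold $\Xi^s(z;\lambda)$, whose dimension equals $\dim \xi^s_+$ and whose solutions approach $\xi^s_+$ exponentially; symmetrically, solutions decaying as $z \to -\infty$ form the unstable manifold $\Xi^u(z;\lambda)$ of dimension $\dim \xi^u_-$ approaching $\xi^u_-$. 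Then I would argue: if $(p,q,r) \in \cH^1 \times \cH^1 \times L^2$, then in particular it is in $L^2$ near $\pm\infty$ and (being a solution of a constant-coefficient-in-the-limit ODE) must decay exponentially; hence it lies in $\Xi^s$ near $+\infty$ and in $\Xi^u$ near $-\infty$, forcing the stated limits $\to \xi^s_+$ and $\to \xi^u_-$.

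One subtlety worth spelling out, and where I expect the only real work to be, is the passage from "$(p,q,r) \in \cH^1 \times \cH^1 \times L^2$" to "decays, hence lies on the stable/unstable manifold." An $L^2$ solution of an asymptotically hyperbolic linear ODE need not a priori decay along the strongly stable direction only — one must rule out the possibility that it has a nonzero component that neither grows nor decays, and rule out components along directions with $\re\mu$ of the "wrong" sign. The growing directions are excluded by $L^2$-integrability outright; the non-decaying (purely imaginary $\re\mu = 0$) directions do not occur because $\lambda \notin \sige$ means $\A_\pm(\lambda)$ is hyperbolic with no eigenvalue on the imaginary axis; and among the genuinely decaying directions, $L^2$ membership together with the structure of the asymptotic system pins the solution onto $\Xi^s$ (resp.\ $\Xi^u$). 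This is a routine but careful application of the variation-of-constants / exponential-dichotomy argument. As in the F-KPP case, I would not reproduce this in full detail but cite \cite{jones84, kapitula2013spectral, pegowein92} for the rigorous version, and include the same intuitive explanation: in $\Omega_1$ the limit system $\A_+(\lambda)$ has only a one-dimensional stable subspace, so a solution decaying as $z\to+\infty$ must do so along $\xi^s_+$, and likewise decay as $z\to-\infty$ forces alignment with $\xi^u_-$; moreover this argument simultaneously shows that for $\lambda$ in the other regions $\Omega_j$ with too few stable/unstable directions, no such decaying solution exists, which is why we restrict attention to $\Omega_1$.

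The main obstacle, then, is not conceptual but one of careful matching: verifying the signature/dimension count at a reference point in $\Omega_1$ using the explicit but somewhat involved limiting coefficients $\cA_\pm$, $\cB_\pm$, $\cC_\pm$, and confirming the exponential convergence $\A(z;\lambda) \to \A_\pm(\lambda)$ with enough uniformity in $\lambda$ to apply the dichotomy theory on all of $\Omega_1$ at once. Everything downstream — defining $\Xi^{s,u}(z;\lambda)$ as (now possibly higher-dimensional) bundles over $\R$, and concluding that $\lambda$ is an eigenvalue iff these subspaces meet nontrivially at $z=0$ — then proceeds verbatim as in \Cref{sec:fkppeig}.
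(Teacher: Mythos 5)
Your proposal is correct and follows essentially the same route as the paper, which simply defers the rigorous argument to the standard exponential-dichotomy/asymptotic-ODE literature \cite{jones84, kapitula2013spectral, pegowein92} and supplements it with the same dimension-counting intuition (hyperbolicity of $\A_\pm(\lambda)$ off $\sige$, a one-dimensional stable subspace of $\A_+$ and two-dimensional unstable subspace of $\A_-$ in $\Omega_1$, exponential convergence of the coefficients). The extra care you take in passing from $L^2$ membership to decay along $\Xi^{s}$ and $\Xi^{u}$ is exactly the content of the cited results and does not constitute a different approach.
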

Again, see \cite{jones84, kapitula2013spectral, pegowein92} for proofs of this proposition. We call a $\lambda$ for which such a solution exists a {\em (temporal) eigenvalue}, with {\em eigenfunction} $\begin{pmatrix} p \\  q \end{pmatrix}$.  Just as in the F-KPP case, we have that eigenvalues are not possible for all values of $\lambda \in \C \setminus \sige$. In particular, if $\lambda \in \Omega_2$, the unstable subspace of $\A_-(\lambda)$ is zero-dimensional and, hence, the kernel of $\cT(\lambda)$ (or equivalently $\cL$)  is empty by \Cref{prop:ks-eigenvalue}. Further, as we will be primarily concerned with spectral stability, and the regions $\Omega_3, \Omega_4, \Omega_5$ and $\Omega_6$ are all contained in the left half of the complex plane, we again focus our attention on $\lambda \in \Omega_1$, where $\A_+(\lambda)$ will have a one-dimensional stable subspace and where $\A_-(\lambda)$ will have a two-dimensional unstable subspace.

The Evans function in this case is set up similarly. The main difference is that now we have a two-dimensional subspace at $-\infty$. Letting $\Xi^u$ and $\Xi^s$ denote the unstable and stable manifolds respectively, we have that $\Xi^s(z; \lambda)$ is a (complex) line bundle (over $\R$) again while $\Xi^u(z;\lambda)$ will be a complex vector bundle of rank 2. 

We let
$$ \bw^s(z;\lambda) = \begin{pmatrix} w^s_1(z;\lambda) \\ w^s_2(z;\lambda) \\ w^s_3(z;\lambda) \end{pmatrix} \quad \textrm{and} \quad 
\bw^u_{j}(z;\lambda) = \begin{pmatrix} w^u_{j,1}(z;\lambda) \\ w^u_{j,2}(z;\lambda) \\ w^u_{j,3}(z;\lambda)\\  \end{pmatrix}\,,  \quad j = 1,2
$$
be a triple of solutions, with $w^s(z;\lambda) \in \Xi^s$ and $w^u_j(z;\lambda)$ a pair of linearly independent solutions to \cref{eq:ks-linearised} in $\Xi^u$ and define the Evans function:
$$ 
D(\lambda) :=\det \begin{pmatrix}  w^u_{1,1}(0;\lambda) & w^u_{2,1}(0;\lambda) &  w^s_1(0;\lambda) \\
  w^u_{1,2}(0;\lambda) & w^u_{2,2}(0;\lambda)&  w^s_2(0;\lambda)\\
   w^u_{1,3}(0;\lambda) & w^u_{2,3}(0;\lambda) & w^s_3(0;\lambda)\,.
  \end{pmatrix}
$$ 

Just as in the F-KPP case, it is clear that $\lambda \in \Omega_1$ is an eigenvalue if and only if $D(\lambda) = 0$. 
\subsection{The Riccati Equation} \label{sec:ksriccati}
Because we are interested in the evolution of subspaces under the flow of a linear ODE, rather than the behaviour of explicit solutions to \cref{eq:ks-linearised}, it is natural to look at how subspaces evolve under the flow described in \cref{eq:ks-linearised}. Since we have a one-dimensional stable subspace $\xi^s_+$ as $z \to + \infty$, we need to understand how the flow from \cref{eq:ks-linearised} leads to a flow on the set of one-dimensional subspaces in $\C^3$, i.e., the complex projective plane $\C P^2$. Likewise, since we have a two-dimensional unstable subspace $\xi^u_-$ as $z \to -\infty$, we need to translate the flow from \cref{eq:ks-linearised} to a flow on the space of two-dimensional subspaces in $\C^3$. This space is called the {\em complex Grassmannian of two planes in three space} and is denoted $Gr(2,3)$. The associated flow on $\C P^2$ or $Gr(2,3)$ is called the Riccati equation. We obtain an expression on each chart of $\C P^2$ or $Gr(2,3)$ by simply differentiating the defining relations (these are called the {\em Pl\"ucker} relations for Gr(2,3)) for each coordinate on each chart.

For $\C P^2$ this is done in the following way (totally analogous to the $\C P^1$ case). A line in $\C^3$ is determined by a triple of numbers $[p:q:r]$ not all zero and subject to the fact that for any complex number $\zeta$, the triple $[\zeta p: \zeta q : \zeta r]$ represents the same line as $[p:q:r]$. Thus, for example, we can write down all the lines where $q \neq 0$ as 
$[\eta_3 : 1: \eta_4]$. Here, $\eta_3 := \dfrac{p}{q}$ and $\eta_4 := \dfrac{r}{q}$. Differentiating and using \cref{eq:ks-linearised} leads to an expression for the Riccati equation on this chart: 
\begin{equation}\label{eq:ric-1}
\begin{split}
\eta_3' & = \frac{\lambda}{c} \eta_3 + \frac{\alpha}{c} - \eta_3 \eta_4 \\ 
\eta_4' & = \cA \eta_3 + \cB + \cC \eta_4 - \eta_4^2.
\end{split}
\end{equation}
Again, the stable subspace of $\A_+(\lambda)$ will be a point in this chart (usually, if not, use another chart), with a one complex dimensional stable manifold, evolving under the Riccati equation. In this chart of $\C P^1$, we will denote such a solution as $[\eta^s_3(z;\lambda):1: \eta^s_4(z;\lambda)]$. 

For $Gr(2,3)$, we use the standard Pl\"ucker embedding of $Gr(2,3) \to \C P^3$. For a pair of vectors in $\C^3$, $\bv = (v_1,v_2,v_3)$ and $\bw = (w_1,w_2,w_3)$, we have that $\bv$ and $\bw$ are linearly independent (i.e. they span a two-plane) provided that not all of $ K_{i,j} := (v_i w_j - v_j w_i)$ for $1\leq i < j\leq 3$ are zero. This gives us a triple $(K_{12},K_{13},K_{23})$ that must not be all zero if $\bv$ and $\bw$ span a plane. Further, the plane spanned by $\zeta_1 \bv $ and $\zeta_2 \bw$ for $\zeta_{1,2} \in \C$ will be the same as that spanned by $\bv$ and $\bw$ and will produce the triple $\zeta_1 \zeta_2 (K_{12},K_{13},K_{23})$. It is thus clear that we can represent a two-plane in three-space as a triple $[K_{12}:K_{13}:K_{23}]$ in $\C P^2$. 

If $\bv$ and $\bw$ are linearly independent solutions to \cref{eq:ks-linearised}, then by using the product rule, the plane spanned by them in the Pl\"ucker coordinates will solve the linear ODE 
\begin{equation}\label{eq:lingrass}
\begin{pmatrix} K_{12} \\ K_{13} \\ K_{14} \end{pmatrix} ' = \begin{pmatrix} \frac{\lambda}{c} & 1 & 0 \\ \cB & \frac{\lambda}{c} + \cC & \frac{\alpha}{c} \\ -\cA & 0 & \cC \end{pmatrix} \begin{pmatrix} K_{12} \\ K_{13} \\ K_{14} \end{pmatrix} = \A \wedge \A \begin{pmatrix} K_{12} \\ K_{13} \\ K_{14} \end{pmatrix} 
\end{equation}
where the last $\A \wedge \A$ means the exterior product of the matrix $\A$ with itself. 

The idea now is to use the Riccati equation for \cref{eq:lingrass} to write down how the linear flow given by \cref{eq:ks-linearised} behaves on pairs of subspaces. From this perspective it is clear that we have three charts from which to choose for the Pl\"ucker embedding of $Gr(2,3)$ (on which the unstable manifold will be a curve) and we have three for the Pl\"ucker embedding of $Gr(1,3)$ (on which the stable manifold will be a curve). Suppose for concreteness, that $K_{12} \neq 0$. Then by setting $\kappa_5 = -\frac{K_{23}}{K_{12}}$ and $\kappa_6 = \frac{K_{13}}{K_{12}}$, and using \cref{eq:lingrass} we have that $\kappa_5$ and $\kappa_6$ will satisfy the nonlinear ODEs
\begin{equation}\label{eq:ric-2}
\begin{split}
\kappa_5' & = \cA + \left(\cC - \frac{\lambda}{c} \right) \kappa_5 - \kappa_5 \kappa_6\\ 
\kappa_6' & = \cB - \frac{\alpha}{c} \kappa_5 + \cC \kappa_6 - \kappa_6^2 
\end{split}
\end{equation}

The unstable subspace of $\A_-(\lambda)$ will be a point on this chart (usually) and it has a one-dimensional unstable manifold, denoted in coordinates on this chart as $[1: \kappa_6^u(z;\lambda): -\kappa_5^u(z;\lambda)]$. 

All that remains is how to relate $\eta_{3,4}$ and $\kappa_{5,6}$ to $D(\lambda)$. Proceeding as we did in the F-KPP case, suppose that the solution $\bw^s$ stays in the same chart (of $\C P^1$) for all $z$ and that the pair of solutions $(\bw^u_1, \bw^u_2)$ stay on the same chart (of $Gr(2,3)$) for all $z$. By way of example, suppose it is in the two charts for which we have written expressions for the Riccati equation, \cref{eq:ric-1,eq:ric-2}, respectively. Then, in particular, we have that $w^s_2(z;\lambda) \neq 0$ and the matrix $\bW^u_{12} := \begin{pmatrix} w_{1,1}^u(z;\lambda) & w_{2,1}^u(z;\lambda) \\ w_{1,2}^u(z;\lambda) & w_{2,2}^u(z;\lambda) \end{pmatrix}$ is invertible for all $z$ (because we are in the charts where $q \neq0$ and where $K_{12} \neq 0$). Defining
$$
K_{12}^u(z;\lambda) := \det \bW^u_{12} = \begin{vmatrix} w_{1,1}^u(z;\lambda) & w_{2,1}^u(z;\lambda) \\ w_{1,2}^u(z;\lambda) & w_{2,2}^u(z;\lambda)  \end{vmatrix} =  w_{11}^u(z;\lambda) w_{22}^u(z;\lambda) - w^u_{12}(z;\lambda) w^u_{21}(z;\lambda) \neq 0
$$

We have that the matrix
\begin{equation} 
\begin{split}
\begin{pmatrix}  w^u_{1,1}(z;\lambda) & w^u_{2,1}(z;\lambda) &  w^s_1(z;\lambda) \\
  w^u_{1,2}(z;\lambda) & w^u_{2,2}(z;\lambda)&  w^s_2(z;\lambda)\\
   w^u_{1,3}(z;\lambda) & w^u_{2,3}(z;\lambda) & w^s_3(z;\lambda) \end{pmatrix}  & 
   \begin{pmatrix} \left(\bW^u_{12}\right)^{-1}  & 0 \\ 0  & \frac{1}{w_2^2(z;\lambda)} \end{pmatrix} \\   =  
  \begin{pmatrix} 1 & 0 & \eta^s_3(z;\lambda) \\ 0 & 1 & 1 \\ \kappa_5^u(z;\lambda)  & \kappa_6^u(z;\lambda)  & \eta_4^s(z;\lambda) \end{pmatrix} & 
  \end{split}
\end{equation}
is well defined for all values of $z$. Evaluating at $z=0$ and taking determinants gives
\begin{equation}\label{eq:evans}
\frac{D(\lambda)}{K_{12}^u(0;\lambda) w^s_2(0;\lambda)} = \eta_4^s(0;\lambda) - \kappa_6^u(0;\lambda) - \eta_3^s(0;\lambda)\kappa_5^u(0;\lambda)\,.
 \end{equation}
Define the function
 $$
E_{12q}(\lambda) :=  \eta_4^s(0;\lambda) - \kappa_6^u(0;\lambda) - \eta_3^s(0;\lambda)\kappa_5^u(0;\lambda)\,.
 $$
The subscripts indicates that the $q$ coordinate of $\Xi^s$ and the $K_{12}$ coordinate of $\Xi^u$ are both $ \neq0$. 
Since each of the solutions that we are tracking stay in the same chart, $E_{12q}(\lambda) = 0$ if and only if $D(\lambda) =0.$ Again, provided the solutions $\eta^s_{3,4}$ and $\kappa^u_{5,6}$ stay in the same charts, we can use the argument principle to determine the number of zeros $E_{12q}(\lambda)$ has for any prescribed curve in the region $\Omega_1$. 
  
\subsection{Switching charts and extending into the continuous spectrum} \label{sec:ks-charts} Just as in the F-KPP case, should a singularity of the solution of the Riccati equation appear, we can interpret this as the solution leaving the chart. Then we can switch to a different chart by the same method described earlier: namely choose a value $z_0$ for which the solution is not singular, use this as an initial condition on a different chart and evolve the solution on said chart beyond the point of singularity. Then, if desired, one can switch back to the original chart. 

It is also worth noting, that as we are only ever tracking a finite number of solutions to the Riccati equation on compact manifolds, it is always possible to find at least one set of charts (one for $\C P^2$ and one for $Gr(2,3)$), on which all of the solutions we are interested in will remain for all $z$ (though this is not necessarily always one of the canonical charts). That is, it is always possible to choose charts so that the solutions used in the shooting for the Evans function stay bounded for all values of the independent variable. For the parameter values considered in this example, we found that the charts $[\frac{p}{q}: 1: \frac{r}{p}]$, and [$1: \frac{K_{13}}{K_{12}}:\frac{K_{23}}{K_{12}}]$ would suffice for all $\lambda$ with $\re{\lambda} \geq 0$ (excepting a small neighbourhood containing the absolute spectrum see \Cref{rem:absspec}).

This function $E_{12q}(\lambda)$ (or its analog on any pair of charts from $\C P^2$ and $Gr(2,3)$) can naturally be extended into the continuous spectrum. We define `eigenvalues', as in the F-KPP case, not as values of $\lambda$ for which we can find a solution to \cref{eq:ks-linearised} decaying to zero but for which we can find a solution to 
\cref{eq:ks-linearised} decaying in a specific, geometric way. As we vary $\lambda$ across the dispersion relation curves into the continuous spectrum, we can continuously track $\xi^s_+(\lambda)$ and $\xi^u_-(\lambda)$. This gives a straightforward continuation of $E_{12q}(\lambda)$ (or its analogs on other charts) into the continuous spectrum (though not the absolute spectrum).

\subsection{Stability Analysis}\label{ks-stab} 
In this section we numerically establish that there is no point spectrum of the operator $\cL$ with real part between $0$ and $10^7$, except possibly in the region $\cR := [0,0.3] \times [4i, -4i] \setminus B_{0.01}(0)$. We also show that $\lambda = 0$ is an eigenvalue of multiplicity $2$. For this analysis, the parameter values chosen were the same as in \cite{harley2014geometric}, namely $\alpha =1, \beta=2, c= 2,$ and $\delta =1$. 



Using the Ricatti Evans functions outlined in this section, we can numerically verify that there are no eigenvalues (in the sense of \Cref{def:eigess}) for a large region in the right half complex plane (out to $|\lambda| < 10^7 $), both within and without of the continuous spectrum. We first compute the Evans function $E_{12q}(\lambda)$ on a spectral curve consisting of the right half of an annulus (including the imaginary axis) with inner radius $r = 4$ and outer radius $r =10^7$. We can visually inspect that there is no winding of the Evans function around the origin, and thus conclude that there are no eigenvalues of the operator $\cL$ in this region,  see \Cref{fig:ks-evans1}.

We next compute the function $E_{12q}(\lambda)$ for $\lambda \in \C$ on the boundary of the half disc of radius $r=4$ shifted to the right by $0.3$ (see \Cref{fig:ks-evans2}). Again, here we can visually inspect that the winding number of the Evans function about the origin is zero, and we conclude that there are no eigenvalues of $\cL$ in this region either. \Cref{fig:ks-evans1,fig:ks-evans2} allow us to conclude that all eigenvalues of the the operator $\cL$ in the right half plane either have norm greater that $10^7$ or else lie in the region $\cR := [0,0.3] \times [4i, -4i] \setminus B_{0.01}(0)$ in the complex plane. 

 \begin{figure}
\includegraphics[scale=0.75]{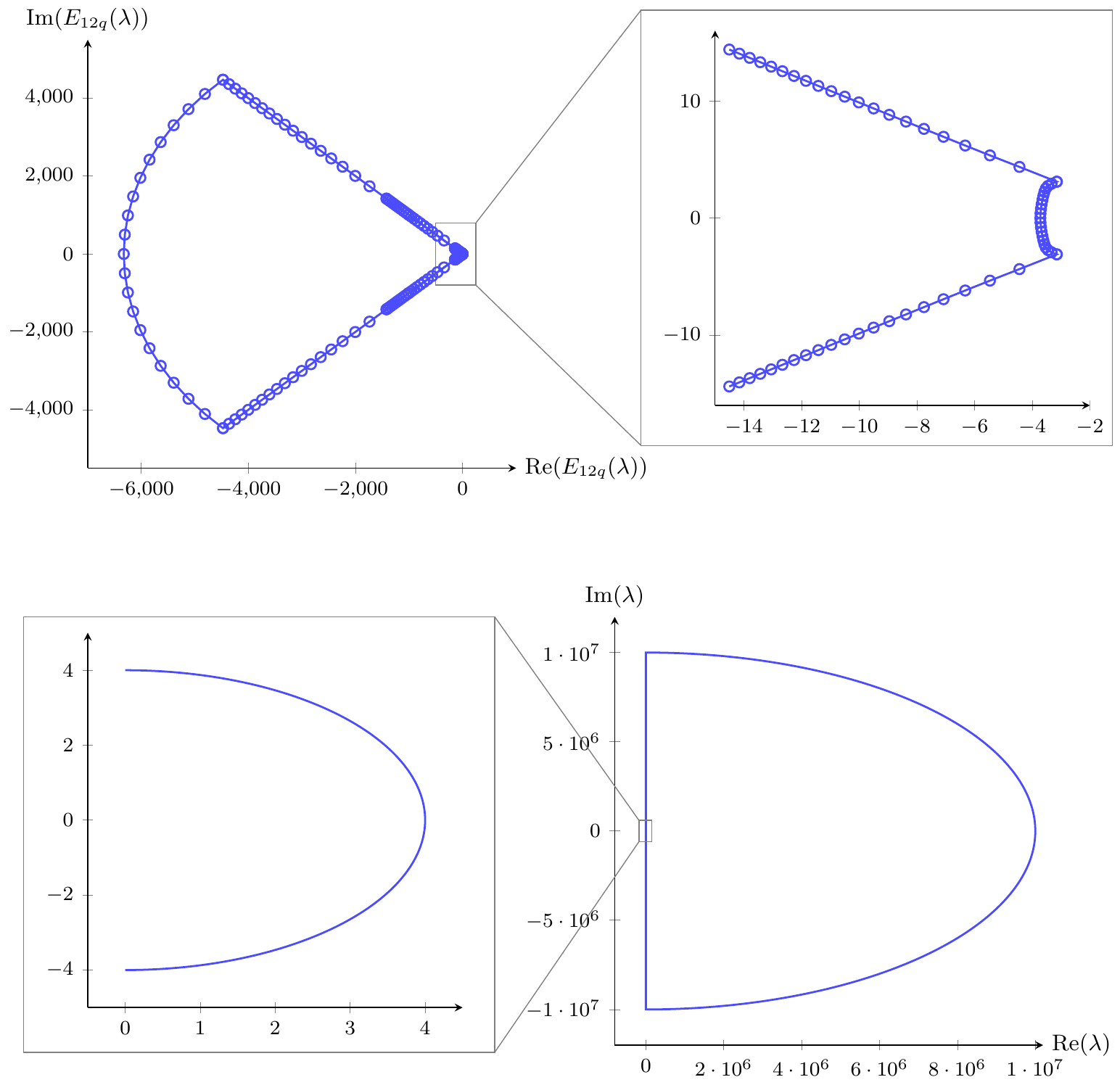}
 \caption{A plot of the function $E_{12q}(\lambda)$ (top) for $\lambda$ on the closed curve on the bottom. We have that $4 <|\lambda|<10^7$. As can be seen in the top right figure, the image of the Evans function clearly does not wind around the origin. We conclude that there are no eigenvalues in this region. The top right picture is a zoomed in plot of the Evans function nearer to 0, while the bottom right picture is a zoomed in plot of the curve in the spectral plane.} \label{fig:ks-evans1}
 \end{figure}
 
   \begin{figure}
\includegraphics[scale=0.75]{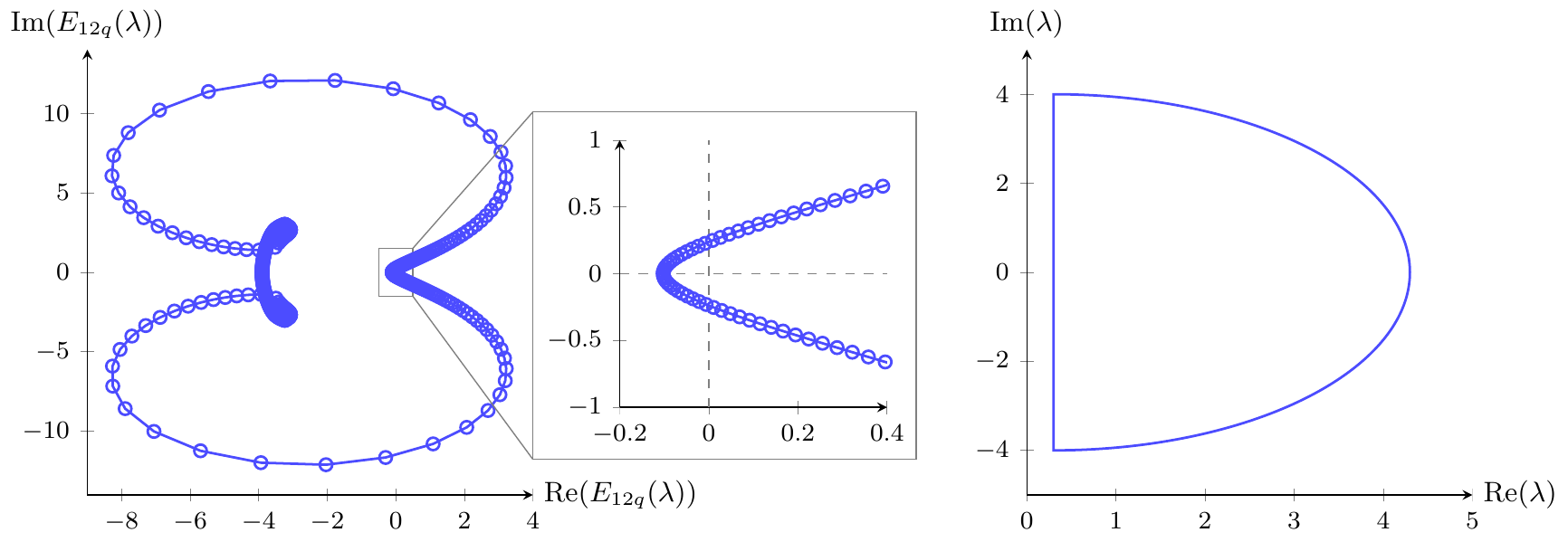}
 \caption{A plot of the function $E_{12q}(\lambda)$ (left) for $\lambda$ on the closed curve on the right. The central inset shows that there is no winding of the Evans function in this region about the origin either.} \label{fig:ks-evans2}
 \end{figure}

In order to evaluate the function $E_{12q}(\lambda)$ reasonably efficiently, one needs to be sufficiently far enough away from the absolute spectrum. For the parameters considered in this manuscript, it was found that the absolute spectrum is not the entire region $[0,0.3] \times [-4i,4i]$, but is bounded away from the origin (see \Cref{fig:ks-absolute}). We were thus able to evaluate the Evans function $E_{12q}(\lambda)$ for $\lambda$ on the boundary of a small disc (radius $r = 10^{-2}$) about the origin. We found that on this boundary the function $E_{12q}(\lambda)$ wound around the origin two times, and so we conclude that $\lambda=0$ is an eigenvalue of multiplicity $2$. See \Cref{fig:ks-evans3,fig:ks-evans4}. 

  \begin{figure}
\includegraphics[scale=0.75]{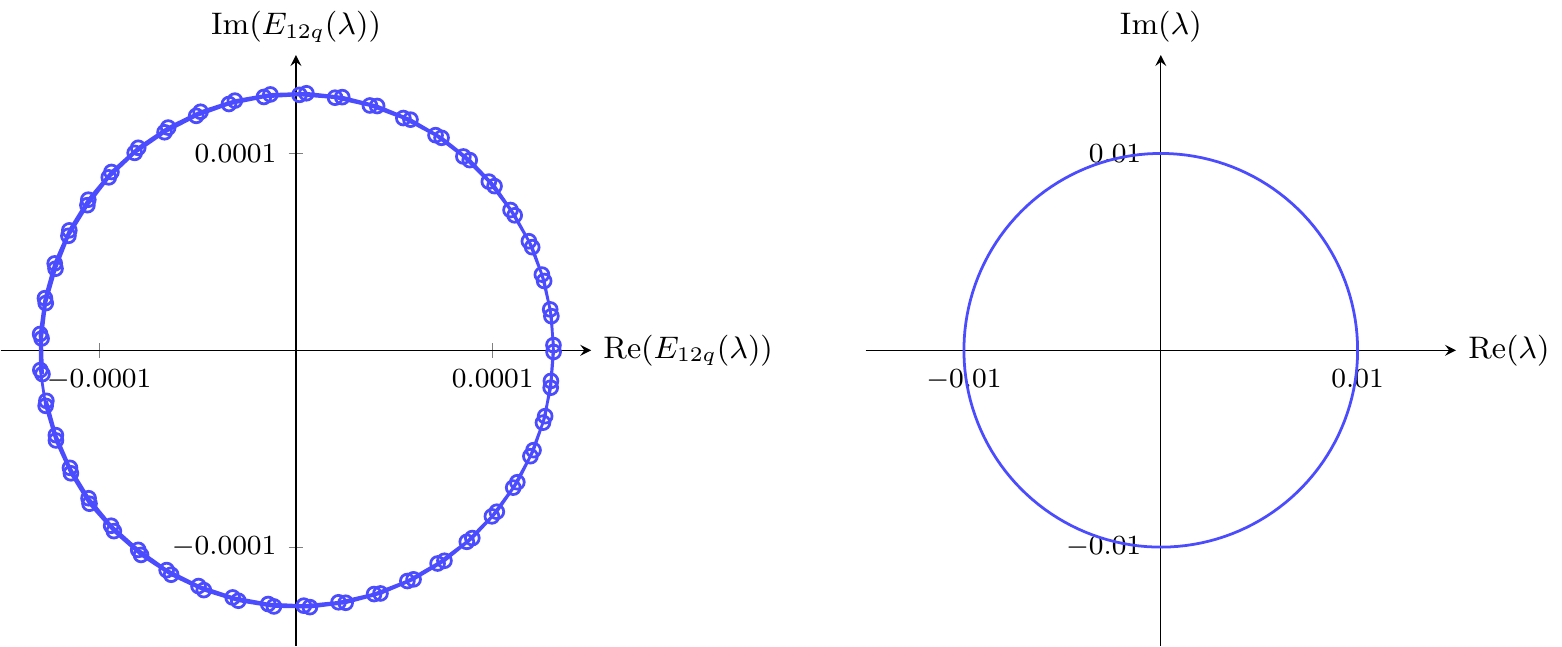}
 \caption{A plot of the function $E_{12q}(\lambda)$ (left) for $\lambda$ on the closed curve on the right. It is clear that the Evans function winds around the origin, suggesting that $\lambda=0$ is an eigenvalue.} \label{fig:ks-evans3}
 \end{figure}

   \begin{figure}
 \includegraphics[scale=1]{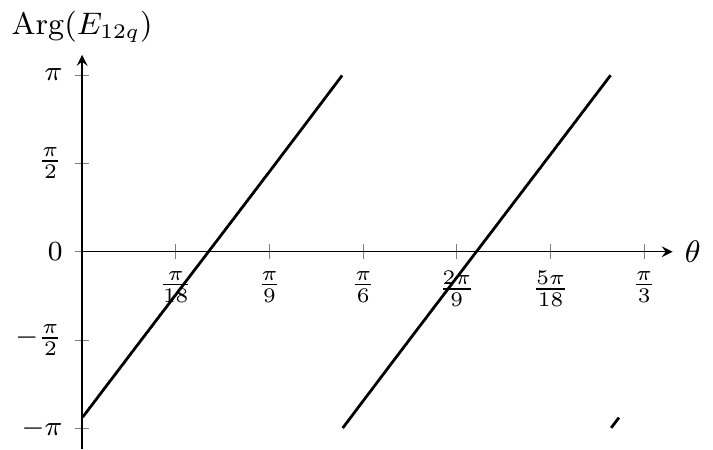} \\
 \caption{A plot of the argument of the function $E_{12q}(\lambda)$ for $\lambda = 10^{-2}e^{2\pi it}$ as $t$ passes through a domain of length 1.The argument goes through a change of $4\pi$ suggesting that $0$ is an eigenvalue of multiplicity $2$.} \label{fig:ks-evans4}
 \end{figure}

\section{Summary of Results and Concluding Remarks} \label{sec:summary} 

We have illustrated how to use the underlying geometry of the spectral problem in order to facilitate computation of the spectrum of a linearised operator about a travelling wave in a PDE with 1+1 independent variables. The geometric interpretation of the Riccati equations allows us to handle the blow-up of solutions to nonlinear ODEs. We have thus used these solutions to develop new Evans functions, and used them to numerically verify the spectral stability of travelling waves in the F-KPP equation, and the absence of eigenvalues in a large region of the complex plane for the the explicit travelling waves in {\color{black} the K-S system}
when $\ve = 0$.  We have also shown in this case that $\lambda =0$ is an eigenvalue of multiplicity $2$. 

The Evans functions we have produced are seemingly very well behaved in comparison to more naive attempts at computing them. They are reasonably easy to compute for large values of the spectral plane, and their winding around the origin can be visually inspected in both the examples that we have shown. Finally, our methods are fairly general, we are able to develop the corresponding Riccati Equations and Evans functions for a general class of non-self adjoint operators, and we can compute the Evans functions for a large set of values in the spectral plane and also, regardless of the dimensions of the stable and unstable subspaces at $\pm \infty$, $\xi^u_-$ and $\xi^s_+$.

\subsection{Summary of stability results}

We have verified that the continuous spectrum of the linearised operator $\cL$, linearised about travelling waves in the F-KPP equation can be weighted to the left half plane,
Further we have explicitly verified that there are no eigenvalues in the sense of \Cref{def:eigess} with $\re{\lambda}\geq0$, in the F-KPP travelling waves with wave speed $c> 2\sqrt{\delta}$. We have provided a new proof of spectral stability of the travelling waves of speed $c > 2 \sqrt{\delta}$ to the F-KPP equation. Since the operator is sectorial, we can therefore confirm linear stability of the F-KPP travelling waves \cite{kato76}.

For the K-S system when $\ve =0$ and for the explicit solutions in \cref{eq:keller-segel-sols} and parameters considered, we were unable to weight the continuous spectrum into the left half plane. This is consistent with known results about the system \cite{nagai1991travelling} and suggests the presence of absolute spectrum with positive real part. The absolute spectrum appears to be bounded away from the origin, and therefore enters the right half plane at some point on the imaginary axis (for the parameter values used in this work, we numerically found this to be between $\pm 2i$ and $\pm4i$, see \Cref{fig:ks-absolute}). It is unclear what effects this has on the dynamics of the travelling waves, and further study is required. 

We have verified that for the linear operator linearised about the Keller--Segel waves $\bar{u}$ and $\bar{w}$ in \cref{eq:keller-segel-sols}, there are no eigenvalues with  $0\leq \re{\lambda} \leq 10^7$ except possibly in the region $\cR := [0,0.3] \times [4i, -4i] \setminus B_{0.01}(0).$ We have also numerically shown that $0$ is an eigenvalue of multiplicity $2$. 

\subsection{Future Work: The K-S system in the case when $\ve \neq 0$}\label{sec:smalldisp}

If we return to \cref{eq:ks-pde-full} and consider $0 < \ve \ll 1$, travelling waves are still known to exist (see for example \cite{wang2013} and the references therein) though no explicit formula for them is known. Further it was shown in \cite{HvHP14} that the travelling wave solutions in this case, say $\bar{u}_\ve(z)$ and, $\bar{w}_\ve(z)$ are perturbations of $\bar{u}$ and $\bar{w}$ from \cref{eq:keller-segel-sols}. One could then linearise around $(\bar{u}_\ve(z), \bar{w}_\ve(z))$, and by computing the asymptotic limits of the functions, their derivatives and appropriate ratios of them, determine the dispersion relations, and subsequently the continuous spectrum of the linearised operator. We conjecture (as is typical in these types of travelling wave examples) that the inclusion of a nonzero diffusion term in the first equation of \cref{eq:ks-pde-full} will lead to the resulting linearised operator being sectorial. In this instance however, we expect to see absolute spectrum in the right half plane, though the impact of this on the explicit dynamics as in the $\ve = 0$ case may not be clear. 

We then aim to repeat the procedure outlined above to numerically investigate whether there were eigenvalues for the linearised system. Numerically finding $\bar{u}_\ve(z)$ and $\bar{w}_\ve(z)$ is a bit time consuming, and as this manuscript was primarily to provide examples illustrating our methods, we have, in the interest of expediency, elected to focus on the model where explicit solutions are known. 

Provided that one can numerically find the solutions $(\bar{u}_\ve, \bar{w}_\ve)$ however, it is not difficult to extend our methods to compute a similar Evans function and determine the presence (or lack thereof) of eigenvalues in the right half plane. The emerging Riccati equations will determine a flow on $Gr(k, 4)$, the Grassmannian of $k$ planes in $\C^4$, where $k$ is determined by the dimensions of the stable and unstable subspaces of the asymptotic end states of the operator for $\lambda$ in the region equivalent to $\Omega_1$ (i.e.\,to the right of the continuous spectrum). Further, the expressions for the Riccati equations are found in much the same way as for the Keller--Segel and F-KPP models, one must just use a different Pl\"ucker embedding for each separate $k$ appearing in the problem. The expressions will require ${{4}\choose{k}} - 1$ variables (the dimension of the range space in the Pl\"ucker embedding), however some of these can be eliminated by the so-called {\em Pl\"ucker relations}, the varietal conditions that the Grassmannian $Gr(4,k)$ must satisfy. In this case, there is at most one of these, and only when $k=2$, but for higher order systems, there can be many more (there will be  ${{n}\choose{k}} - 1 - k(n-k)$ of them for a general system.)

\bibliographystyle{plain}

\bibliography{fisherkpp_mathbio.bib}

\end{document}